\active\gdef@{\mkern1mu}}
\DeclareMathOperator{\tw}{tw}
\DeclareMathOperator{\cro}{cro}
\newlength{\negph@wd}
\DeclareRobustCommand{\negphantom}[1]{%
  \ifmmode
    \mathpalette\negph@math{#1}%
  \else
    \negph@do{#1}%
  \fi
}
\newcommand{\negph@math}[2]{\negph@do{$\m@th#1#2$}}
\newcommand{\negph@do}[1]{%
  \settowidth{\negph@wd}{#1}%
  \hspace*{-\negph@wd}%
}
\newtheorem{theorem}{Theorem}
\newtheorem{definition}[theorem]{Definition}
\newtheorem{observation}[theorem]{Observation}
\newtheorem{lemma}[theorem]{Lemma}
\newtheorem{corollary}[theorem]{Corollary}
\newtheorem{example}[theorem]{Example}
\definecolor{viridis7blue}{RGB}{61, 97, 136}
\author{Frank Göring\affiliationmark{1}
  \and Tobias Hofmann\affiliationmark{2}\thanks{My research was partly funded by the Deutsche Forschungsgemeinschaft (DFG, German Research Foundation) -- Project-ID 416228727 -- SFB 1410.}}
\title[Properties of uniformly $3$-connected graphs]{Properties of uniformly $3$-connected graphs}
\affiliation{
  Chemnitz University of Technology, Chemnitz, Germany\\
  TU Berlin, Berlin, Germany}
\keywords{uniform connectivity, graph constructions, crossing number, treewidth, vertices of minimum degree}
\begin{document}
\publicationdata
{vol. 25:3 special issue ICGT’22}
{2023}
{3}
{10.46298/dmtcs.10472}
{2023-05-05; None}
{2023-10-19}

\maketitle

\begin{abstract}
  A graph on at least~${{k+1}}$ vertices is uniformly $k$-connected if each pair of its vertices is connected by $k$ and not more than $k$ independent paths. We reinvestigate a recent constructive characterization of uniformly $3$-connected graphs and obtain a more detailed result that relates the number of vertices to the operations involved in constructing a respective uniformly \mbox{$3$-connected} graph. Furthermore, we investigate how crossing numbers and treewidths behave under the mentioned constructions. We demonstrate how these results can be utilized to study the structure and properties of uniformly $3$-connected graphs with minimum number of vertices of minimum degree.
\end{abstract}

\section{Introduction}
Among the many connectivity concepts in graph theory, requiring the same connectivity between each pair of a graph's vertices may seem to be quite restrictive. Yet it might be a valuable feature of certain communication or supply networks and, from a theoretical point of view, uniform connectivity nicely complements the notions of ordinary, minimal, or average connectivity. When studying the latter, Beineke, Oellermann, and Pippert~\cite{beineke2002average} introduced uniformly connected graphs as they became interested in determining for which graphs the connectivity equals the average connectivity. Let us begin by recalling the following definition, whereas we refer to Diestel~\cite{diestel2017graph} for basic graph theoretical terminology.

\begin{definition}
For a number~$k\in\mathbb{N}$ a graph on at least~${{k+1}}$ vertices is called \emph{uniformly $k$-connected} if each pair of its vertices is connected by $k$ and not more than $k$ independent paths.
\end{definition}

It is not hard to see that uniformly $1$-connected graphs are exactly all trees and uniformly $2$-connected graphs are exactly all cycles. Further examples are wheel graphs for~${{k=3}}$ or $k$-regular, \mbox{$k$-connected} graphs for~${{k\in\mathbb{N}}}$. In more detail, such relations as well as uniformly edge-connected graphs, in which each pair of vertices is connected by $k$ and not more than $k$ edge-disjoint paths, are discussed by Göring, Hofmann, and Streicher~\cite{hofmann2022uniformly}. This article also contains the following characterization.\newpage

\begin{theorem}\label{thm:constr}
A graph is uniformly $3$-connected if and only if it is contained in the following recursively defined class~$\mathcal{C}$. 
\begin{figure}[t]
    \centering
    \includegraphics{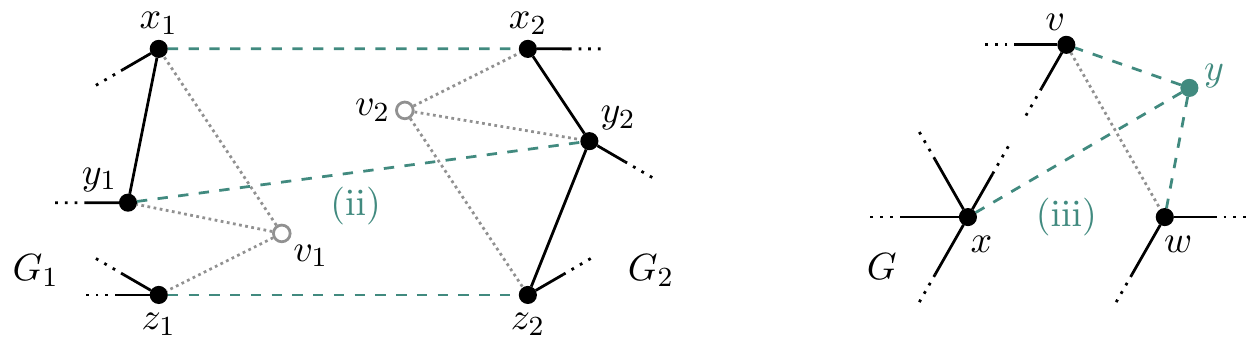}
    \caption{Constructing uniformly $3$-connected graphs}
    \label{fig:operations}
\end{figure}
\begin{enumerate}[(i)]
    \item If a graph $G$ is $3$-regular and $3$-connected, then $G$ shall be contained in $\mathcal{C}$.\label{thm:constr:i}
    \item For two graphs ${{G_1, G_2\in\mathcal{C}}}$ with vertices ${{v_1\in V(G_1)}}$ and ${{v_2\in V(G_2)}}$ whose neighborhoods are ${{N(v_1)=\{x_1,y_1,z_1\}}}$ and ${{N(v_2)=\{x_2,y_2,z_2\}}}$, we include in~$\mathcal{C}$ the graph
    \begin{equation*}
        (G_1-v_1)\cup (G_2-v_2) + x_1x_2 + y_1y_2 + z_1z_2.\label{thm:constr:ii}
    \end{equation*}
    \item For a graph ${{G\in\mathcal{C}}}$ with distinct vertices ${{v, w, x\in V(G)}}$, containing~${{vw\in E(G)}}$, and satisfying ${{\deg(z)=3}}$ for all ${{z\in V(G)\setminus\{x\}}}$, we include in~$\mathcal{C}$ the graph
    \begin{equation*}
        G+y-vw+vy+wy+xy,
    \end{equation*}
    where~${{y\notin V(G)}}$ is a new vertex to be added to~$G$.\label{thm:constr:iii}
\end{enumerate}
\end{theorem}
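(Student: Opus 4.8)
The plan is to prove the two implications separately, each by induction. For ``every graph in $\mathcal{C}$ is uniformly $3$-connected'' I would induct along a construction sequence, verifying that the base case~(i) and the rules~(ii) and~(iii) each produce a uniformly $3$-connected graph from uniformly $3$-connected ingredients. For the converse I would induct on $|V(G)|$, showing that a uniformly $3$-connected graph $G$ is either $3$-regular --- hence in $\mathcal{C}$ by~(i) --- or admits a reverse application of~(ii) or of~(iii) to strictly smaller uniformly $3$-connected graph(s), to which the induction hypothesis applies.

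For soundness, the base case~(i) is immediate from Menger's theorem: a $3$-connected graph has at least three independent paths between any two vertices, while a vertex of degree~$3$ admits at most three, so a $3$-regular $3$-connected graph is uniformly $3$-connected. For rule~(iii) the key point is that passing from $G$ to $G' := G + y - vw + vy + wy + xy$ changes no degree except that of $x$, which increases by one; as $x$ was already the unique vertex of $G$ of degree exceeding~$3$, the same holds in $G'$, so every pair of vertices of $G'$ still contains a vertex of degree~$3$, and the ``not more than three'' half of the definition is automatic. For the ``three'' half it suffices to show $G'$ is $3$-connected, which I would prove by contradiction: $G'-y = G-vw$ is $2$-connected, and a hypothetical $2$-separator $\{p,q\}$ of $G'$ either avoids $y$ --- but then deleting the single edge $vw$ can at worst split $G-\{p,q\}$ into a $v$-part and a $w$-part, which the paths through the vertex $y$ (adjacent to $v$, $w$, and $x$) reconnect --- or contains $y$, in which case the remaining vertex together with the edge $vw$ would already separate $G$; either way the $3$-connectivity of $G$ is contradicted. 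For rule~(ii), write $H$ for the output and $F = \{x_1x_2, y_1y_2, z_1z_2\}$ for the edge cut between the two sides. A brief case check on where two deleted vertices lie --- using the $2$-connectivity of $G_i - v_i$ and the fact that at most two cut edges can be destroyed --- shows $H$ is $3$-connected. For the exact count between vertices $u$ and $w$ one uses parity: a $u$--$w$ path meets $F$ in an even number of edges if $u$ and $w$ lie on the same side and an odd number otherwise. On opposite sides, four independent paths would need four distinct single edges of $F$, impossible; on the same side, at most one of four independent paths can use two edges of $F$, and rerouting that one excursion back through the deleted vertex $v_i$ imports a fourth independent path into $G_i$, contradicting that $G_i$ is uniformly $3$-connected. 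Conversely, the lower bound of three is obtained by splicing the Menger fans at $v_1$ and $v_2$ (for $u,w$ on opposite sides) or by rerouting through the other side (for $u,w$ on the same side), which is where the connectivity of $G_2 - v_2$ is needed.

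For completeness, induct on $|V(G)|$. The smallest uniformly $3$-connected graph is $K_4$, which is $3$-regular and hence in $\mathcal{C}$ by~(i); likewise any $3$-regular uniformly $3$-connected $G$ is in $\mathcal{C}$ by~(i). So suppose $G$ is not $3$-regular. Then $G$ has a vertex of degree at least~$4$, and since the only uniformly $3$-connected graph on four vertices is $K_4$ we have $|V(G)| \ge 5$; as a uniformly $3$-connected graph is never $4$-connected, $G$ then has a $3$-separator. I would fix a $3$-separator $S$ whose smallest side $C$ in $G - S$ is as small as possible and proceed by a case analysis that, in each case, exhibits a reverse application of~(ii) or~(iii). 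Typical cases: if $G$ has a separating triangle --- which is forced, for instance, when $G$ has a unique vertex of degree $\ge 4$ and each of its degree-$3$ neighbours has its two remaining neighbours adjacent --- then the three edges leaving the triangle form an edge cut with sides of sizes $3$ and $|V(G)|-3 \ge 2$, realising $G$ as the output of~(ii) with $G_1 = K_4$; if there is a degree-$3$ vertex $y$ with $N(y) = \{v,w,x\}$, $vw \notin E(G)$, and $x$ the unique vertex of degree $\ge 4$, then $G = G'' + y - vw + vy + wy + xy$ with $G'' := G - y + vw$, which has $x$ as its only vertex of degree exceeding~$3$ and, for a judicious choice of $y$, is uniformly $3$-connected, so~(iii) applies; and otherwise the chosen separator yields a non-trivial $3$-edge-cut, and capping each side with a new degree-$3$ vertex produces two strictly smaller uniformly $3$-connected graphs whose image under~(ii) is $G$.

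I expect completeness to be the main obstacle, and within it two things need real care. First, one must show that a reducible configuration is \emph{always} available; this is where the extremal choice of $S$ matters, together with an exchange argument trading a locally bad configuration for a usable one. Second, one must check that the reduced graph really is uniformly $3$-connected: deleting a degree-$3$ vertex and re-inserting an edge, or capping the side of an edge cut, can destroy $3$-connectivity for an unlucky choice (for example when the candidate vertex $y$ lies on a $3$-separator that distributes $N(y)$ unfavourably), and excluding this amounts to tracking Menger fans across the relevant cut. The soundness direction, by contrast, is comparatively routine once the parity bookkeeping for rule~(ii) and the degree observation for rule~(iii) are in place.
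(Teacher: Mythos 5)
A preliminary remark: the paper you were asked about does not prove Theorem~\ref{thm:constr} at all; it imports the statement from the cited article of G\"oring, Hofmann, and Streicher, so there is no in-paper proof to measure your attempt against. Judged on its own terms, the soundness half of your proposal is essentially right and can be completed along the lines you sketch: the degree observation handles the ``at most three'' half for~(iii), your two-case analysis of a hypothetical $2$-separator of $G+y-vw+vy+wy+xy$ is correct, and the parity-plus-rerouting argument for the bridge operation (at most one of four independent same-side paths can make an excursion across the $3$-edge-cut, and that excursion can be rerouted through the deleted vertex $v_i$) is the right idea.

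The completeness half, however, contains a genuine gap rather than a deferred routine verification, and you have correctly located it yourself. Two points are not plans but missing ideas. First, the reverse of~(iii) is only available when $G$ has \emph{at most one} vertex of degree exceeding three, because the input graph of~(iii) is required to be $3$-regular away from $x$; so whenever $G$ has two or more high-degree vertices you are forced into a reverse bridge, which requires a $3$-edge-cut that is a \emph{matching} with at least three vertices on each side (each side must cap to a $3$-connected, hence $\geq 4$-vertex, graph). Your ``otherwise'' case silently passes from a minimum-order $3$-vertex-separator to such an edge cut, and that passage is not automatic: the three vertices of a separator need not send exactly one edge each to the small side, and the small side of your extremal separator may have fewer than three vertices. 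Second, even when a degree-$3$ vertex $y$ with the right local structure exists, you must certify that $G-y+vw$ is again \emph{uniformly} $3$-connected, i.e.\ that inserting $vw$ does not create a fourth independent path between some pair of high-degree vertices; you flag this (``for a judicious choice of $y$'') but give no mechanism for making the choice or for the exchange argument that repairs a bad one. Until the dichotomy (two or more high-degree vertices $\Rightarrow$ matching $3$-edge-cut exists; exactly one $\Rightarrow$ a reducible spoke or bridge configuration exists) is actually proved, the induction does not close.
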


The operations~\eqref{thm:constr:ii} and~\eqref{thm:constr:iii} are illustrated in Figure~\ref{fig:operations}. We refer to~\eqref{thm:constr:ii} as a \emph{bridge operation} and to~\eqref{thm:constr:iii} as a \emph{spoke operation}. More precisely, if~${{\deg(x)=3}}$ in~\eqref{thm:constr:iii}, we call it a \emph{primary spoke operation} and if~${{\deg(x)>3}}$, we call it a \emph{secondary spoke operation}. Note that the class of $3$-regular $3$-connected graphs is contained in the class of uniformly $3$-connected graphs. In turn, the class of uniformly $3$-connected graphs is contained in the class of $3$-connected graphs. So Theorem~\ref{thm:constr} is in a sense complementary to the classical constructions by Tutte~\cite{tutte1961three,tutte1966connectivity} for $3$-regular $3$-connected and~$3$-connected graphs. A natural question to ask when learning about a class of graphs is what degrees one might see. In extremal graph theory, this led to extensive research on the minimum number of vertices of minimum degree. Formally, for a graph~$G$ one asks for the parameter
\begin{equation*}
    \nu(G)\coloneqq\big|\big\{v\in V(G):\deg(v)=\,\min_{\mathclap{v\in V(G)}}\;\deg(v)\big\}\big|.
\end{equation*}
A cornerstone on which many related investigations build on is the result by Halin~\cite{halin1969theorem}, who proved that a minimally \mbox{$k$-connected} graph contains a vertex of degree~$k$. A series of results on that topic is concluded by Mader~\cite{mader1979struktur}, who gave the tight bound~${{\nu(G)\geq \lceil((k-1)@n+2@k)/(2@k-1)\rceil}}$ for a minimally \mbox{$k$-connected} graph~$G$ on~$n$ vertices. This result does also hold for uniformly $3$-connected graphs, as those are minimally $k$-connected. See Beineke, Oellermann, and Pippert~\cite{beineke2002average} for a proof of that result. But as minimally $k$-connected graphs do not have to be uniformly $k$-connected, there can be stronger bounds on~$\nu(G)$ and indeed there is the following result.
\begin{theorem}\label{thm:bound}
A uniformly $3$-connected graph~$G$ on $n$ vertices satisfies
\begin{equation*}
    \nu(G) \ge \lceil(2n+2)/3\rceil.
\end{equation*}
\end{theorem}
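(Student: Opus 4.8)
The plan is to proceed by induction on the number of vertices, using the constructive characterization of Theorem~\ref{thm:constr}. The base case consists of the $3$-regular $3$-connected graphs, for which $\nu(G)=n\ge\lceil(2n+2)/3\rceil$ holds trivially since every vertex has the minimum degree~$3$. For the inductive step one analyzes how $n$ and $\nu$ change under a bridge operation and under the three flavors of spoke operation (primary spoke, secondary spoke on a vertex of degree exactly~$4$, and secondary spoke on a vertex of higher degree). In each case one wants to show that the deficit $\nu(G)-(2n+2)/3$ does not decrease, i.e.\ that the inequality is preserved; ideally one tracks a slightly sharper invariant, perhaps distinguishing graphs that are $3$-regular (where $\nu=n$) from those that are not, since the arithmetic is tightest there.

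The key bookkeeping is as follows. A bridge operation merges $G_1$ on $n_1$ vertices and $G_2$ on $n_2$ vertices, deleting $v_1$ and $v_2$ and adding three edges between their former neighbors; the result has $n=n_1+n_2-2$ vertices, and the degrees of all vertices other than $x_i,y_i,z_i$ are unchanged while each $x_i,y_i,z_i$ keeps its degree (it loses the edge to $v_i$ but gains one new edge). So the minimum degree is still~$3$ and $\nu(G)\ge\nu(G_1)+\nu(G_2)-6$ (we lose at most the six vertices $x_i,y_i,z_i$ from the count, and in fact lose fewer), whence $\nu(G)\ge\nu(G_1)+\nu(G_2)-6$; plugging in the inductive bounds gives $\nu(G)\ge(2n_1+2)/3+(2n_2+2)/3-6=(2n+2)/3+\text{(something)}$ — one checks the constant works out, possibly needing the parity/ceiling slack. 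A spoke operation on $G$ with $n'$ vertices yields $n=n'+1$ vertices: the new vertex $y$ has degree~$3$, the endpoints $v,w$ of the subdivided edge keep degree~$3$, and $x$ has its degree increased by one. In the primary case ($\deg(x)=3$ in $G$, so $G$ was $3$-regular) we go from $\nu(G')=n'$ to $\nu(G)=n'$ (vertex $x$ leaves the min-degree set, vertex $y$ enters), and $n'= n-1\ge\lceil(2(n-1)+2)/3\rceil$ suffices to get $n'\ge\lceil(2n+2)/3\rceil$ when the ceiling is exploited. In the secondary case $\nu(G)=\nu(G')+1$ (we add~$y$) while $n=n'+1$, so the bound is immediate from the inductive hypothesis since $\nu(G')\ge\lceil(2n'+2)/3\rceil$ gives $\nu(G')+1\ge\lceil(2n'+2)/3\rceil+1\ge\lceil(2n+2)/3\rceil$.

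The delicate point, and the one I expect to be the main obstacle, is the bridge operation together with the interplay with the ceiling function: writing out $\lceil(2n_1+2)/3\rceil+\lceil(2n_2+2)/3\rceil$ and comparing with $\lceil(2(n_1+n_2-2)+2)/3\rceil$ requires care about residues of $n_1,n_2$ modulo~$3$, and in some residue combinations the naive estimate $\nu(G)\ge\nu(G_1)+\nu(G_2)-6$ is exactly tight and one must argue that \emph{fewer} than six vertices actually leave the min-degree count — for instance, that a vertex $x_i$ of degree~$3$ in $G_i$ might not have been counted in $\nu(G_i)$ if some other vertex had smaller degree, or conversely that vertices of minimum degree persist. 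A clean way to handle this is to strengthen the induction hypothesis to a two-case statement: either $G$ is $3$-regular with $\nu(G)=n$, or $G$ is not $3$-regular and then $\nu(G)\ge\lceil(2n+2)/3\rceil$ with the slightly better additive behavior that makes all the merge cases close. One also needs the easy structural fact (derivable from Theorem~\ref{thm:constr}, or from $3$-connectivity) that a uniformly $3$-connected graph has minimum degree exactly~$3$, so that ``$\nu$'' always counts degree-$3$ vertices; this removes any ambiguity about what the minimum degree is in each constructed graph.
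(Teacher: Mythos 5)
Your overall strategy---a direct induction along the constructive characterization of Theorem~\ref{thm:constr}---is viable and genuinely different in packaging from the paper's proof. The paper instead first proves the counting formula $n = 4 + 2@j + 2@t + p + s$ (Theorem~\ref{thm:main}), observes that a construction with $j$ bridge operations combines $j+1$ input graphs each of which admits at most one primary spoke operation (so $p \leq j+1$, hence $p \leq \lfloor(n-2)/3\rfloor$), and concludes via $\nu(G) \geq n - p$, since the primary spoke operation is the only operation that decreases the number of minimum-degree vertices, and by exactly one. Your induction localizes the same insight; the paper's version buys, as a by-product, the explicit relations~\eqref{eqn:1}--\eqref{eqn:4} that are reused later (Theorem~\ref{thm:numbers}), which a pure induction on $\nu$ would not deliver.

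However, your bridge case as written does not close. You correctly note that each of $x_i,y_i,z_i$ keeps its degree (losing the edge to $v_i$ and gaining one new edge), yet you then subtract $6$ for these six vertices ``leaving the count.'' They do not leave the count at all: the only vertices removed are $v_1$ and $v_2$, both of degree three, so in fact
\begin{equation*}
  \nu(G) = \nu(G_1) + \nu(G_2) - 2,
\end{equation*}
exactly. With the constant $-6$ the induction fails badly, since $\lceil(2@n_1+2)/3\rceil + \lceil(2@n_2+2)/3\rceil - 6 \geq \lceil(2@n+2)/3\rceil - 4$ is all you get; with the correct constant $-2$ one has $\lceil(2@n_1+2)/3\rceil + \lceil(2@n_2+2)/3\rceil - 2 \geq \lceil(2@n_1+2@n_2+4)/3\rceil - 2 = \lceil(2@n+8)/3\rceil - 2 = \lceil(2@n+2)/3\rceil$, and no residue analysis, strengthened hypothesis, or two-case invariant is needed. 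The ``delicate point'' you anticipate is therefore an artifact of the miscount; the primary and secondary spoke cases you describe are correct (in the primary case one only needs $n' \geq \lceil(2@n'+4)/3\rceil$, which holds for all $n' \geq 4$), and no separate edge-join case is needed since your base case already covers all $3$-regular $3$-connected graphs.
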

This result is proven in~\cite{hofmann2022uniformly}. We call a uniformly $3$-connected graph \emph{extremal} if it attains the bound from Theorem~\ref{thm:bound}. The results of Section~\ref{sec:main} shall help us to learn more about that class. There we show in detail how the number of vertices of a uniformly $3$-connected graph depends on the operations involved in constructing it. Furthermore, we show that the bridge operation preserves in a sense crossing numbers and under certain conditions treewidths larger than two. We denote the crossing number of a graph~$G$ by~$\cro(G)$ and its treewidth by~$\tw(G)$. Section~\ref{sec:appl} is intended to demonstrate how our results can be used, for example, to find out when extremal uniformly $3$-connected graphs are planar.

\section{Main results}\label{sec:main}

In what follows, we build on one of the characterizations by Tutte~\cite[Chapter~12]{tutte1966connectivity}, which says that all~$3$-regular $3$-connected graphs can be obtained from a complete graph on four vertices by a sequence of \emph{edge joins}. Formally, for a graph~$G$ and two edges~${{st,vw\in E(G)}}$ \emph{joining} them means to build the graph
\begin{equation*}
    G+x+y-st-vw+sx+xt+vy+yw+xy
\end{equation*}
where~${{x,y\notin V(G)}}$ are new vertices to be added to~$G$. This construction is illustrated in Figure~\ref{fig:edgejoin}. Note also that~$st$ and~$vw$ are two distinct edges, but they may share one endvertex.
\begin{figure}[t]
    \centering
    \includegraphics{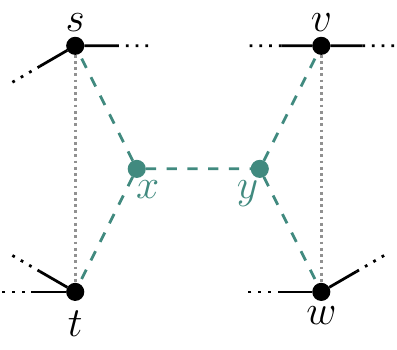}
    \caption{Joining two edges of a $3$-regular graph}
    \label{fig:edgejoin}
\end{figure}
\begin{theorem}\label{thm:main}
A uniformly $3$-connected graph $G$ on $n$ vertices satisfies
\begin{equation*}
    n = 4 + 2@j + 2@t + p + s
\end{equation*}
if~$G$ is constructed from complete graphs on four vertices by a sequence of $j$ bridge operations,~$t$ edge joins, $p$ primary spoke operations and $s$ secondary spoke operations.\vspace{0.226ex}
\end{theorem}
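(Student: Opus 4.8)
The plan is to unfold the recursive characterization of Theorem~\ref{thm:constr} into a construction tree, refine that tree by inserting the edge joins supplied by the characterization of Tutte recalled above, and then track the number of vertices through the construction by an easy induction.

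First I would combine the first clause of Theorem~\ref{thm:constr} (its $3$-regular $3$-connected base graphs) with Tutte's edge-join characterization to see that every graph in $\mathcal{C}$ indeed arises from copies of $K_4$ by bridge operations, edge joins, primary spoke operations and secondary spoke operations, so that a construction of the kind referred to in the statement always exists. Along the way I would record the auxiliary fact that an edge join adds exactly two vertices, so, since $|V(K_4)|=4$, a $3$-regular $3$-connected graph on $m$ vertices (with $m$ necessarily even) is obtained from $K_4$ by precisely $(m-4)/2$ edge joins; hence replacing each base graph of a $\mathcal{C}$-construction by its Tutte construction leaves the final graph — and so $n$ — unchanged while rendering every edge join explicit.

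Next I would carry out the bookkeeping. View the refined construction as a rooted tree with root $G$, leaves the copies of $K_4$, binary internal nodes the $j$ bridge operations (each merging two previously built graphs), and unary internal nodes the $t$ edge joins, $p$ primary spoke operations and $s$ secondary spoke operations. In any rooted tree whose internal nodes are unary or binary, the number of leaves is one more than the number of binary nodes, so the construction uses $j+1$ copies of $K_4$, contributing $4(j+1)$ vertices. It remains to read off the effect of each operation on the vertex count: an edge join introduces its two new vertices $x,y$ and so adds $2$; a primary spoke operation and a secondary spoke operation each introduce exactly one new vertex $y$ (the two cases differ only in whether the attachment vertex $x$ had degree $3$ or more), and so each adds $1$; and a bridge operation applied to graphs on $n_1$ and $n_2$ vertices yields $(n_1-1)+(n_2-1)=n_1+n_2-2$ vertices, i.e.\ it loses $2$ relative to the sum of its inputs. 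Summing all contributions gives
\begin{equation*}
    n = 4(j+1) + 2t + p + s - 2j = 4 + 2j + 2t + p + s .
\end{equation*}
The same count can be organized as a structural induction on the number of operations, with base case a single $K_4$ (where $n=4$ and $j=t=p=s=0$) and one line per operation for the inductive step, the bridge step combining the two induction hypotheses for $G_1$ and $G_2$ together with $j=j_1+j_2+1$.

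The only delicate point — the ``main obstacle'' such as it is — is the two-argument nature of the bridge operation: one has to use that $j$ bridge operations are necessarily fed by exactly $j+1$ base graphs, since otherwise the constant $4$ and the coefficient of $j$ in the formula come out wrong. Everything else is a routine reading of the degrees in the four operations.
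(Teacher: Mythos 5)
Your proposal is correct and is essentially the paper's argument: the paper proves the identity by structural induction on the last operation (edge join adds $2$, spoke operations add $1$, a bridge gives $n=n_1+n_2-2$ with $j=j_1+j_2+1$), which is exactly the bookkeeping you perform, merely presented as a global count over the construction tree rather than as the induction you mention at the end. Your explicit remark that $j$ bridge operations force $j+1$ base copies of $K_4$ is the same fact the paper's induction encodes via $j=j_1+j_2+1$.
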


\hspace{-3.5mm}\textbf{Proof:}~The smallest uniformly $3$-connected graph is the complete graph on four vertices, for which ${{j=t=p=s=0}}$ and our claim holds. Now suppose we are given a graph~$G$ on~$n$ vertices and our statement is true for all graphs on less than~$n$ vertices.

First, take the case where an edge join is the final operation in the sequence of operations to build~$G$. Then~$G$ arises from a graph~$G'$ with~${{n=|V(G)|=|V(G')|+2}}$, as an edge join adds two vertices. Denoting the number of edge joins to build~$G'$ by~$t'$, we have ${{t=t'+1}}$. By induction, we obtain
\begin{align*}
    n &= |V(G)|=|V(G')|+2 \\
      &= 4 + 2@j + 2@t'+2 + p + s \\
      &= 4 + 2@j + 2@t + p + s.
\end{align*}
Primary or secondary spoke operations add one vertex, as is illustrated in Figure~\ref{fig:operations}. If such an operation is the final operation to build~$G$, we can argue as in the previous case. It remains the case where a bridge operation is the final operation to build~$G$. Then~$G$ arises from two graphs~$G_1$ and~$G_2$. In view of Figure~\ref{fig:operations}, we have~${{n=|V(G)|=|V(G_1)| + |V(G_2)| - 2}}$, as well as~${{j=j_1+j_2+1}}$, ${{t=t_1+t_2}}$, ${{p=p_1+p_2}}$, and~${{s=s_1+s_2}}$, where~$j_i,t_i,p_i,s_i$ are the respective numbers of bridge operations, edge joins, primary and secondary spoke operations used when constructing~$G_i$, where ${{i\in\{1,2\}}}$. By induction, we obtain
\begin{align*}
    n &= |V(G)|=|V(G_1)| + |V(G_2)| - 2 \\
      &= 4 + 2@j_1 + 2@t_1 + p_1 + s_1 + 4 + 2@j_2 + 2@t_2 + p_2 + s_2 - 2 \\
      \phantom{\square}\hspace*{26.746mm}\phantom{n}&= 4 + 2@(t_1+t_2) + 2@(j_1 + j_2 + 1) + (p_1+p_2) + (s_1+s_2)\hspace*{26.746mm}\phantom{\square} \\
      &= 4 + 2@t + 2@j + p + s.\negphantom{4 + 2@j + 2@t + p + s.}\phantom{4 + 2@(j_1 + j_2 + 1) + 2@(t_1+t_2) + (p_1+p_2) + (s_1+s_2)}\hspace*{26.746mm}\square
\end{align*}

This allows us to reprove Theorem~\ref{thm:bound} as well as to obtain some additional conditions on the numbers of operations involved.

\begin{proof}[of Theorem~\ref{thm:bound}] For a uniformly $3$-connected graph~$G$ on~$n$ vertices, Theorem~\ref{thm:main} tells us that
\begin{equation}
    n = 4 + 2@j + 2@t + p + s.\label{eqn:1}
\end{equation}
Let us recall that a primary spoke operation, by definition, can only be applied to $3$-regular graphs, and it raises one of the respective degrees to four. A graph whose construction involves~$j$ bridge operations is formed by recursively combining ${{j+1}}$ input graphs. For each input graph, one is allowed to use at most one primary spoke operation. In other words,
\begin{equation}
    j+1 \geq p~\Rightarrow~2@j \geq 2@p-2.\label{eqn:2}
\end{equation}
Combining Equations~\eqref{eqn:1} and~\eqref{eqn:2}, we obtain
\begin{equation}
    n \geq 2 + 2@t + 3@p +s\geq 2 + 3@p~\Rightarrow~p \leq \lfloor(n-2)/3\rfloor.\label{eqn:3}
\end{equation}
The primary spoke operation is the only operation that reduces the number of vertices of minimum degree. It does so by exactly one. Consequently,
\begin{equation}
    \nu(G) \geq n-p\geq \lceil(2@n+2)/3\rceil,\label{eqn:4}
\end{equation}
which was to be shown.
\end{proof}

Another property we shall verify in this section is that the bridge operation preserves the crossing numbers of the input graphs. In our proof, we build on the following basic fact about graph embeddings, presented by West~\cite[Chapter~6]{west2001introduction}.
\begin{lemma}\label{lem:outerface}
If~$E$ is the edge set of a face of some planar embedding of a graph~$G$, then there is an embedding of~$G$ such that~$E$ is the edge set of the outer face.
\end{lemma}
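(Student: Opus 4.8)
The plan is to pass from the plane to the sphere, where no face plays a distinguished role, move the desired face to ``infinity'', and project back. First I would take the given planar embedding, regarded as a crossing-free drawing of $G$ in the plane, and compose it with the inverse of the stereographic projection $\pi\colon S^2\setminus\{N\}\to\text{plane}$ from the north pole $N$. This produces a crossing-free drawing of $G$ on the $2$-sphere $S^2$, lying entirely in $S^2\setminus\{N\}$, whose regions (the connected components of the complement of the drawing) correspond bijectively to the faces of the original planar embedding; the old outer face corresponds to the region containing $N$, and each region is incident to exactly the same set of edges as the face it came from, because $\pi$ is a homeomorphism.

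Next, let $f$ be the face of the original embedding with edge set $E$, and pick a point $q$ in the interior of the corresponding spherical region — this is possible since that region is a nonempty open subset of $S^2$ disjoint from the drawing, and in particular we may take $q\neq N$. Choose a rotation $\rho\colon S^2\to S^2$ with $\rho(q)=N$. Since $\rho$ is a homeomorphism of the entire sphere, it carries the spherical drawing of $G$ to another spherical drawing, still avoiding $N$ (as $N=\rho(q)$ lies in the image of the region that was $f$, which is disjoint from the image of the drawing), whose regions are the $\rho$-images of the previous ones with the same edge incidences; in particular the image of $f$ is now the region containing $N$.

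Finally I would apply the stereographic projection $\pi$ once more to obtain a crossing-free planar drawing of $G$. Under $\pi$, the unique region of a spherical drawing containing $N$ becomes the unbounded region of the resulting planar drawing, i.e.\ its outer face, while all other face--edge incidences are preserved because $\pi$ restricted to $S^2\setminus\{N\}$ is a homeomorphism onto the plane. Tracking $E$ through the composition ``lift $\;\to\;$ rotate $\;\to\;$ project'', its image is precisely the edge set of the outer face of the new embedding, which is what we want.

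The only point that genuinely needs care — and the one I expect to be the main (indeed the sole) obstacle — is making explicit the invariance of the notion ``edge set of a face'' under each of the three maps: one has to note that faces are exactly the connected components of the complement of the drawing, that a homeomorphism of the ambient surface (or of its relevant open part) sends components to components bijectively, and that an edge lies on the boundary of a component if and only if its image lies on the boundary of the image component. Once this bookkeeping is set up, the geometry is entirely routine and no structural properties of $G$ (connectivity, $3$-regularity, etc.) are used at all.
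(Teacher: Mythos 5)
Your stereographic-projection argument is correct and is exactly the standard proof of this fact; the paper does not prove the lemma itself but cites it from West, where the argument is precisely this lift-to-the-sphere, rotate, and project-back construction. No gaps — the bookkeeping about faces corresponding under homeomorphisms is the only point needing care, and you address it.
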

\begin{theorem}\label{thm:crossings}
If $G$ is the result of applying the bridge operation on graphs~$G_1$ and~$G_2$, then
\begin{equation*}
    \cro(G)\leq \cro(G_1) + \cro(G_2).\vspace{-1ex}
\end{equation*}
\end{theorem}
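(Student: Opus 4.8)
The plan is to build, from optimal drawings of $G_1$ and $G_2$, a drawing of $G$ with at most $\cro(G_1)+\cro(G_2)$ crossings. Fix an optimal drawing $D_i$ of $G_i$ on the sphere, for $i\in\{1,2\}$. Since the finitely many crossing points of $D_i$ miss the vertex $v_i$, one can pick a small closed disk $B_i$ around $v_i$ that meets the drawing only in $v_i$ and in an initial segment of each of the three edges $v_ix_i, v_iy_i, v_iz_i$, and that contains no crossing. Let $\Delta_i$ denote the closed disk obtained from the sphere by deleting the interior of $B_i$, and let $D_i'$ be the part of $D_i$ lying in $\Delta_i$: it is a drawing of $G_i-v_i$ together with three ``stubs'', one leading from each of $x_i, y_i, z_i$ to a point of the boundary circle $\partial\Delta_i=\partial B_i$, and it realizes exactly the $\cro(G_i)$ crossings of $D_i$. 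The three stub endpoints sit on $\partial\Delta_i$ in some cyclic order.

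To assemble $G$, take two disjoint closed disks $R_1, R_2$ on the sphere, so that $A:=S^2\setminus(R_1\cup R_2)$ is an open annulus, and transfer $D_i'$ into $R_i$ by a homeomorphism $\Delta_i\to R_i$; this carries the three stub endpoints onto $\partial R_i$. The edges of $G$ bridging the two sides, namely $x_1x_2$, $y_1y_2$, $z_1z_2$, are then drawn as concatenations of the matching stub in $R_1$, a connecting arc across $A$, and the matching stub in $R_2$; every other edge and every vertex of $G$ is already drawn inside $R_1\cup R_2$, disjointly from $A$. Hence, beyond the $\cro(G_1)+\cro(G_2)$ crossings inherited from the two pieces, the only crossings that could arise are among the three connecting arcs, or between those arcs and the pieces---and the latter is impossible since the arcs lie in $A$.

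So everything comes down to routing the three connecting arcs disjointly in the annulus $A$. Three pairwise disjoint arcs in $A$ joining $\partial R_1$ to $\partial R_2$ and realizing the three prescribed endpoint pairs exist exactly when the cyclic order of $x_1, y_1, z_1$ along $\partial R_1$ matches---up to the orientation bookkeeping for the two boundary circles of $A$---the cyclic order of $x_2, y_2, z_2$ along $\partial R_2$; indeed, cutting $A$ along one such arc turns it into a disk in which the other two arcs become parallel chords. This is exactly where it matters that there are \emph{three} bridging edges: a three-element set carries only two cyclic orders, each the reverse of the other, so by choosing each homeomorphism $\Delta_i\to R_i$ to be orientation-preserving or orientation-reversing---the latter meaning we use the mirror image of $D_i$, which has the same crossing number---one can force the two boundary cyclic orders to agree. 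The resulting drawing of $G$ has exactly $\cro(G_1)+\cro(G_2)$ crossings, which yields $\cro(G)\le\cro(G_1)+\cro(G_2)$. The main point to handle carefully is this topological bookkeeping: that a small enough $B_i$ really does slice the edges at $v_i$ into clean, crossing-free stubs, and that the cyclic-order-plus-reflection argument genuinely produces a disjoint routing. This is presumably also where Lemma~\ref{lem:outerface} enters the authors' argument, letting them say instead that the three stub endpoints can be put on the \emph{outer} face of each piece, after which the two outer faces are identified.
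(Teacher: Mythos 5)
Your proof is correct and follows the same core strategy as the paper's: combine drawings of $G_1$ and $G_2$ realizing their crossing numbers, route the three bridge edges through a crossing-free region near the deleted vertices, and use a reflection of one piece to reconcile the two cyclic orders at $v_1$ and $v_2$. The execution differs in a way worth noting. The paper works in the plane: it planarizes the drawing of $G_i$, observes that since $\deg(v_i)=3$ two of the three edges at $v_i$ lie on a common face, and invokes Lemma~\ref{lem:outerface} to bring that face to the outside before routing the connecting arcs through the outer face. You instead work on the sphere and excise a small disk around each $v_i$, so the routing happens in an explicit annulus and no outer-face normalization (and hence no appeal to Lemma~\ref{lem:outerface}) is needed. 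Your version also makes the last step more explicit than the paper's: the existence of three disjoint connecting arcs is reduced to the fact that a three-element set carries only two cyclic orders, one obtainable from the other by a reflection of the drawing --- precisely the point the paper handles by ``choosing the orientation as in Figure~\ref{fig:crossings}.'' Both arguments are sound; yours trades the outer-face lemma for a small amount of annulus topology and is, if anything, slightly cleaner on the orientation bookkeeping.
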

\begin{proof}
We are given two graphs~${{G_1, G_2}}$ with vertices ${{v_1\in V(G_1)}}$ and ${{v_2\in V(G_2)}}$ whose neighborhoods are ${{N(v_1)=\{x_1,y_1,z_1\}}}$ and ${{N(v_2)=\{x_2,y_2,z_2\}}}$ and a graph
    \begin{equation*}
        G\coloneqq(G_1-v_1)\cup (G_2-v_2) + x_1x_2 + y_1y_2 + z_1z_2.
    \end{equation*}
At first, let us consider some drawing of~$G_1$ in the plane, possibly with crossings. We obtain a \emph{planarization}~$P$ of this drawing by replacing each occurring crossing by a new vertex. In this process, we may have to subdivide some of the edges in~$\{x_1v_1,y_1v_1,z_1v_1,x_2v_2,y_2v_2,z_2v_2\}$. The vertex on the former edge~$x_1v_1$ excluding~$v_1$ but including~$x_1$ that is closest to $v_1$ shall be denoted by~$x_1'$. Analogously, we define $y_1',z_1',x_2',y_2',z_2'$. Since~${{\deg(v_1)=3}}$, we know that two of the three edges $x_1'v_1,y_1'v_1,z_1'v_1$, say $x_1'v_1$ and $y_1'v_1$, are both contained in the edge set of some face of~$P$. Lemma~\ref{lem:outerface} tells us that there is an embedding of~$P$ such that~$\{x_1'v_1,y_1'v_1\}$ is contained in the edge set of the outer face. Replacing the vertices we introduced when planarizing~$G$ back to crossings, we obtain a drawing of~$G_1$ where parts of both edges~$x_1v_1$ and $y_1v_1$ are incident to the outer face. Even more, since we can reflect the embedding of~$G_1$ across a line through~$v_1$, it is possible to choose the orientation of~$\{x_1v_1,y_1v_1\}$. Likewise, we can take a drawing of~$G_2$ where parts of~$x_2v_2$ and~$y_2v_2$ are incident to the outer face. In other words, our situation is essentially as in Figure~\ref{fig:crossings}.

\begin{figure}[t]
    \centering
    \includegraphics{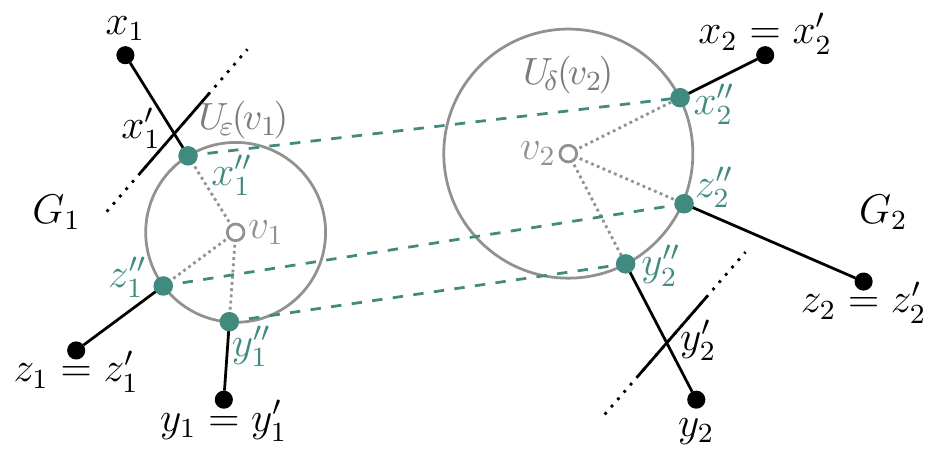}
    \caption{The bridge operation acting on graphs embedded in the plane}
    \label{fig:crossings}
\end{figure}
Since we embedded finite graphs in the plane, we can find radii~${{\varepsilon,\delta>0}}$ such that the discs~$U_\varepsilon(v_1)$ and~$U_\delta(v_2)$ do not contain $x_1',y_1',z_1',x_2',y_2'$, or $z_2'$. We denote the intersection of the edge~$x_1v_1$ with the disc~$U_\varepsilon(v_1)$ by~$x_1''$ and the intersection of the edge~$x_2v_1$ with the disc~$U_\delta(v_2)$ by~$x_2''$. This provides us with a polygonal arc, leading from $x_1$ to $x_1''$ to $x_2''$ to $x_2$. There are analogous polygonal arcs linking $y_1$ with~$y_2$ and $z_1$ with~$z_2$. Those polygonal arcs can be drawn without intersections when choosing the orientation of the embeddings of~$G_1$ or~$G_2$ as in Figure~\ref{fig:crossings}. This tells us that we can build~$G$ out of~$G_1$ and~$G_2$ by the bridge operation without adding any additional crossings. So~${{\cro(G)\leq\cro(G_1) + \cro(G_2)}}$.
\end{proof}

Finally, we will ask how the bridge operation affects the treewidths of the input graphs. So let us recall the following terms.

\begin{definition}\label{def:trd}
A \emph{tree decomposition} of a graph~$G$ is a pair ${{(\{X_i:i\in I\},T=(I,F))}}$ where $T$ is a tree and each \emph{node} $i\in I$ has a \emph{bag} ${{X_i\subseteq V(G)}}$ such that the following properties hold.
\begin{enumerate}
    \item Each vertex of~$V$ belongs to some bag, or~${{\cup_{i\in I}X_i = V}}$.\label{def:trd:i}
    \item For all ${{vw\in E(G)}}$ there exists an~$i\in I$ such that ${{v,w\in X_i}}$.\label{def:trd:ii}
    \item For all~${{v\in V}}$ the set of nodes ${{\{i\in I:v\in X_i\}}}$ induces a subtree of~$T$.\label{def:trd:iii}
\end{enumerate}
The \emph{width} of a tree decomposition ${{(\{X_i:i\in I\},T=(I,F))}}$ is $\max_{i\in I} |X_i|-1$ and the \emph{treewidth} of a graph~$G$ is the minimum width of all tree decompositions of~$G$. We shall denote the latter by~$\tw(G)$.
\end{definition}
Before we focus on how the treewidth behaves under the  bridge operation, let us recall the following facts, whose proofs can be found in Bodlaender~\cite{bodlaender1998partial}.
\begin{lemma}\label{lem:minors}
If~$H$ is a minor of~$G$, then~${{\tw(H)\leq \tw(G)}}$.
\end{lemma}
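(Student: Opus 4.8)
The plan is to reduce the statement to the three elementary minor operations and handle each one separately. Recall that $H$ being a minor of $G$ means that $H$ can be obtained from $G$ by a finite sequence of edge deletions, vertex deletions, and edge contractions, so it suffices to prove that none of these operations increases the treewidth; the claim then follows by induction on the length of such a sequence. Throughout, I would fix an optimal tree decomposition $(\{X_i:i\in I\},T=(I,F))$ of $G$, that is, one of width $\tw(G)$, and in each case modify it into a tree decomposition of the smaller graph whose width is no larger.

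Two of the three cases are routine. If we delete an edge $e\in E(G)$, then the very same pair $(\{X_i:i\in I\},T)$ is already a tree decomposition of $G-e$: properties~\ref{def:trd:i} and~\ref{def:trd:iii} of Definition~\ref{def:trd} do not refer to edges at all, and property~\ref{def:trd:ii} only becomes easier to satisfy. If we delete a vertex $v\in V(G)$, I would replace each bag $X_i$ by $X_i\setminus\{v\}$. No bag grows, properties~\ref{def:trd:i} and~\ref{def:trd:ii} are immediate for $G-v$, and property~\ref{def:trd:iii} survives because removing $v$ from every bag deletes the subtree of $T$ that carried $v$ and leaves the subtree of every other vertex untouched.

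The one case requiring genuine care is edge contraction. Let $e=uv\in E(G)$ and let $w$ be the vertex of $H=G/e$ obtained by identifying $u$ and $v$. I would define
\begin{equation*}
    X_i' \coloneqq \begin{cases} (X_i\setminus\{u,v\})\cup\{w\} & \text{if } u\in X_i \text{ or } v\in X_i,\\ X_i & \text{otherwise,}\end{cases}
\end{equation*}
so that $|X_i'|\leq|X_i|$ and the width does not increase. Property~\ref{def:trd:i} is clear. For property~\ref{def:trd:ii}, an edge of $H$ is either an edge of $G$ avoiding both $u$ and $v$, which is handled by the original decomposition, or it arises from an edge of $G$ incident to $u$ or $v$, in which case a bag that used to contain that edge's endpoints now contains $w$ together with the other endpoint. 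For property~\ref{def:trd:iii}, the node set $\{i\in I:w\in X_i'\}$ equals $\{i\in I:u\in X_i\}\cup\{i\in I:v\in X_i\}$, a union of two subtrees of $T$.

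The main obstacle is precisely this last point: for an arbitrary pair of vertices the union of their two subtrees need not be connected, so the argument would break down. What saves it is the hypothesis that $uv$ is an edge of $G$: property~\ref{def:trd:ii} applied to $G$ yields a node $i$ with $u,v\in X_i$, so the two subtrees share the node $i$ and their union is again a connected subtree of $T$. Once all three cases are in place, chaining them along a sequence of minor operations from $G$ to $H$ gives $\tw(H)\leq\tw(G)$.
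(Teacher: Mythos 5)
Your proof is correct. Note, however, that the paper does not prove this lemma at all: it is stated as a known fact and the reader is referred to Bodlaender's survey \cite{bodlaender1998partial} for a proof, so there is no in-paper argument to compare against. Your write-up is the standard textbook argument found in that reference: reduce to the three elementary minor operations, reuse the optimal tree decomposition verbatim for edge deletion, shrink bags for vertex deletion, and merge the two endpoint classes into the new vertex for contraction. You correctly isolate the only delicate point, namely that connectivity of the merged node set $\{i\in I:u\in X_i\}\cup\{i\in I:v\in X_i\}$ relies on Condition~\ref{def:trd:ii} providing a common bag for the endpoints of the contracted edge $uv$; without that, the union of two subtrees of $T$ need not be a subtree. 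The induction along the sequence of deletions and contractions then closes the argument. This is a perfectly valid self-contained substitute for the citation.
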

\begin{lemma}\label{lem:cliques}
If~${{(\{X_i:i\in I\},T=(I,F))}}$ is a tree decomposition of a graph~$G$ and~${{W\subseteq V(G)}}$ a clique in~$G$, then there is a node~${{i\in I}}$ such that~${{W\subseteq X_i}}$.
\end{lemma}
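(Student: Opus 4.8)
This is a classical property of tree decompositions, and the plan is to prove it by induction on $|W|$, using nothing but the three axioms of Definition~\ref{def:trd}. For the base cases, if $|W|\le 1$ then any nonempty bag works by property~\ref{def:trd:i}, and if $W=\{u,v\}$ then $uv\in E(G)$, so property~\ref{def:trd:ii} directly supplies a bag containing both vertices.

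For the inductive step with $|W|\ge 3$, I would fix a vertex $v\in W$, apply the induction hypothesis to $W'\coloneqq W\setminus\{v\}$ to obtain a node $j$ with $W'\subseteq X_j$, and then ``slide'' $j$ toward the vertex $v$. Concretely, for each $w\in V(G)$ write $T_w$ for the subgraph of $T$ induced by $\{i\in I:w\in X_i\}$; this is nonempty by property~\ref{def:trd:i} and a subtree by property~\ref{def:trd:iii}. If $j\in T_v$ we are already done, so otherwise let $j'$ be the node of $T_v$ closest to $j$ in $T$. I then claim $W\subseteq X_{j'}$: we have $v\in X_{j'}$ by the choice of $j'$, and for an arbitrary $w\in W'$, the edge $vw\in E(G)$ yields via property~\ref{def:trd:ii} a node $\ell\in T_v\cap T_w$; since $\ell$ is a node of $T_v$, the path in $T$ from $j$ to $\ell$ passes through $j'$, and since $T_w$ is a subtree containing both $j$ (because $w\in W'\subseteq X_j$) and $\ell$, it contains that entire path, hence $j'\in T_w$, i.e. $w\in X_{j'}$. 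As $w\in W'$ was arbitrary, $W\subseteq X_{j'}$.

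The one point that needs care is the tree fact used in the last step: for a connected subgraph of a tree and an external node there is a unique closest node of the subgraph, and it lies on every path from the external node into the subgraph. I expect this elementary observation to be the only real obstacle; everything else is bookkeeping with the axioms. An alternative would be to quote the Helly property for subtrees of a tree — the subtrees $\{T_v:v\in W\}$ pairwise intersect by property~\ref{def:trd:ii}, so they share a common node $i$, and then $W\subseteq X_i$ — but this merely displaces the work into a proof of Helly's property, so I would prefer the self-contained induction above.
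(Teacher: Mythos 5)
Your argument is correct. Note, however, that the paper does not prove this lemma at all: it is quoted as a known fact with a pointer to Bodlaender's survey, so there is no ``paper proof'' to compare against. Your induction on $|W|$ with the sliding step is the standard textbook argument, and every step checks out: the base cases follow from properties~\ref{def:trd:i} and~\ref{def:trd:ii}; in the inductive step the node $j'$ of $T_v$ nearest to $j$ is well defined and lies on every $j$--$\ell$ path with $\ell\in T_v$ (the elementary tree fact you flag, provable by a short median argument); and since $T_w$ is a connected subgraph of $T$ containing both $j$ and $\ell$, it contains the whole path between them and hence $j'$. The Helly-property formulation you mention as an alternative is exactly the same content packaged differently — the pairwise intersections of the subtrees $T_v$, $v\in W$, come from property~\ref{def:trd:ii}, and Helly for subtrees is proved by the same sliding argument — so your preference for the self-contained induction is reasonable. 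The only cosmetic point is the degenerate case $W=\emptyset$, where you need $I\neq\emptyset$ for ``any bag'' to exist; this is harmless and usually assumed.
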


Furthermore, given a graph~$G$, we call a vertex~${{v\in V(G)}}$ with~${{\deg(v)=3}}$ \emph{safe} if~$G$ admits a minimum width tree decomposition having a bag that contains~$v$ and two of its neighbors. In view of Lemma~\ref{lem:cliques}, a vertex of degree three is safe if and only if it has two neighbors that are adjacent or that can be joined by an edge without increasing the treewidth of~$G$. Furthermore, we call a vertex of degree three \emph{unsafe} if it is not safe. By definition, an unsafe vertex has an independent neighborhood, as is the case for the vertex~$v$ in Figure~\ref{fig:cliqueAtunsafe}. Suppose~$v$ is an unsafe vertex of the indicated graph~$G$, with neighborhood~${{N(v)=\{x,y,z\}}}$. Then adding a clique on four vertices by the bridge operation results in a graph that has~${{G+xy}}$ as minor, which can be seen by contracting the vertices shaded in gray. So, in general, the bridge operation can increase the treewidth, but only if we combine graphs at unsafe vertices, as we will show next.
\begin{figure}[t]
    \centering
    \includegraphics{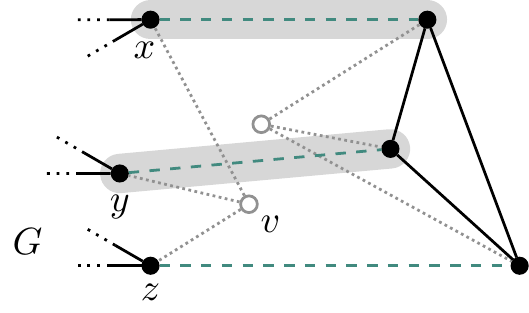}
    \caption{Adding a clique at an unsafe vertex}
    \label{fig:cliqueAtunsafe}
\end{figure}%
\begin{theorem}\label{thm:treewidth}
Consider two graphs~$G_1$ and~$G_2$ with vertices ${{v_1\in V(G_1)}}$ and ${{v_2\in V(G_2)}}$ whose neighborhoods are ${{N(v_1)=\{x_1,y_1,z_1\}}}$ and ${{N(v_2)=\{x_2,y_2,z_2\}}}$ and a graph
    \begin{equation*}
        G\coloneqq(G_1-v_1)\cup (G_2-v_2) + x_1x_2 + y_1y_2 + z_1z_2.
    \end{equation*}
If $v_1$ and~$v_2$ are safe and~${{\max\{\tw(G_1),\tw(G_2)\}\geq 3}}$, then
\begin{equation*}
    \tw(G)=\max\{\tw(G_1),\tw(G_2)\}.
\end{equation*}
\end{theorem}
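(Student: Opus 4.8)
The plan is to prove the two inequalities $\tw(G)\ge\max\{\tw(G_1),\tw(G_2)\}$ and $\tw(G)\le\max\{\tw(G_1),\tw(G_2)\}$ separately; throughout write $k:=\max\{\tw(G_1),\tw(G_2)\}\ge 3$. For the lower bound I would argue that each of $G_1$ and $G_2$ is a minor of $G$ and then apply Lemma~\ref{lem:minors}. Since the inputs of the bridge operation are uniformly $3$-connected, hence $3$-connected, $G_2-v_2$ is connected; contracting the connected induced subgraph $G_2-v_2$ of $G$ to a single vertex $u$ yields a graph in which $u$ is adjacent to exactly $x_1,y_1,z_1$, because the only edges of $G$ running between $V(G_1)\setminus\{v_1\}$ and $V(G_2)\setminus\{v_2\}$ are $x_1x_2$, $y_1y_2$, $z_1z_2$. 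That minor is isomorphic to $G_1$ (with $u$ in the role of $v_1$), so $\tw(G)\ge\tw(G_1)$, and symmetrically $\tw(G)\ge\tw(G_2)$.

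The work is in the upper bound, and the first step is to convert the safety hypothesis into the following statement for each $i\in\{1,2\}$: $G_i-v_i$ admits a tree decomposition $\mathcal{T}_i^{*}$ of width at most $k$ with a bag $B_i^{*}\supseteq\{x_i,y_i,z_i\}$. Starting from a minimum-width tree decomposition $\mathcal{T}_1$ of $G_1$ with a bag $B_1$ containing $v_1$ and two of its neighbors, say $x_1$ and $y_1$, I delete $v_1$ from every bag, which gives a tree decomposition of $G_1-v_1$ of width at most $k$. By property~\ref{def:trd:iii} the nodes whose bags contained $v_1$ form a subtree $S_1$; since $v_1z_1\in E(G_1)$, some node of $S_1$ had $z_1$ in its bag, and I let $\pi$ be the path in $S_1$ joining $B_1$ to such a node. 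Adding $z_1$ to the bag of every node of $\pi$ keeps the width at most $k$ — each such bag had size at most $k+1$ and contained $v_1$, so deleting $v_1$ and inserting $z_1$ does not increase its size — and the nodes now containing $z_1$ still induce a subtree, namely the previous $z_1$-subtree together with $\pi$, which share an endpoint of $\pi$. The bag descended from $B_1$ now contains $x_1,y_1,z_1$; this gives $\mathcal{T}_1^{*}$, and $\mathcal{T}_2^{*}$ is obtained in the same way. This argument is insensitive to which pair of neighbors of $v_i$ happens to be the safe one, since the third neighbor is threaded in along $\pi$.

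It then remains to glue $\mathcal{T}_1^{*}$ and $\mathcal{T}_2^{*}$: join the node of $B_1^{*}$ to the node of $B_2^{*}$ through a path of three new bags $\{x_1,y_1,z_1,x_2\}$, $\{y_1,z_1,x_2,y_2\}$, $\{z_1,x_2,y_2,z_2\}$. A direct check against Definition~\ref{def:trd} shows this is a tree decomposition of $G$: the three new bags cover precisely the bridge edges $x_1x_2,y_1y_2,z_1z_2$; each of $x_1,y_1,z_1,x_2,y_2,z_2$ occurs along an initial or final segment of the new path that attaches to its subtree in the appropriate $\mathcal{T}_i^{*}$; and every new bag has size $4\le k+1$, which is the one place where the hypothesis $k\ge 3$ is used. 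Hence $\tw(G)\le k$, and combined with the lower bound this gives $\tw(G)=k$.

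I expect the main obstacle to be the construction of $\mathcal{T}_i^{*}$, i.e. forcing all three interface vertices into a single bag of a width-$\le k$ decomposition of $G_i-v_i$: safety only supplies a bag with $v_i$ and two of its neighbors, and the crux is that the third neighbor can be inserted ``for free'' precisely because it co-occurs with $v_i$ in some bag and every node on the connecting path is also a $v_i$-bag — which is exactly the mechanism that fails at an unsafe vertex, as illustrated in Figure~\ref{fig:cliqueAtunsafe}. Everything after that is routine bookkeeping.
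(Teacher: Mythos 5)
Your proof is correct, and the upper-bound half takes a genuinely different route from the paper. The paper keeps both original decompositions essentially intact, distinguishes two cases according to how many of the bridge edges $x_1x_2,y_1y_2,z_1z_2$ run between the two chosen bags $X_s$ and $Y_t$, and in each case substitutes $v_1$ and $v_2$ in their bags by carefully chosen vertices of the \emph{other} graph ($z_2$, $y_1$, resp.\ $z_1$) before linking $s$ and $t$ through one or two new $4$-element bags; the verification leans on Figures~\ref{fig:treewidth_bridge_1} and~\ref{fig:treewidth_bridge_2}. You instead first \emph{normalize}: using that the $v_i$-nodes induce a subtree and that every bag on the connecting path $\pi$ pays for $z_i$ by giving up $v_i$, you produce a width-$\le k$ decomposition of $G_i-v_i$ with all three interface vertices in a single bag, and then glue the two decompositions by a fixed path of three $4$-element bags. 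This eliminates the case distinction and the dependence on which neighbors happen to share the safe bag, at the cost of the extra threading step; the resulting argument is arguably more transparent and checks Definition~\ref{def:trd} without pictures. Your lower bound is the same minor argument via Lemma~\ref{lem:minors} as in the paper; you are in fact slightly more careful than the paper in noting that contracting $G_2-v_2$ to a single vertex requires its connectedness (the theorem as literally stated does not hypothesize this, but it holds in the intended setting of $3$-connected inputs, which is also what the paper tacitly uses). One cosmetic remark: your new middle bag $\{y_1,z_1,x_2,y_2\}$ carries $x_2$ and $y_1$ only to keep their occurrence sets connected, which works, but any reader should check, as you indicate, that each of the six interface vertices occupies a contiguous segment of the new path attached to its subtree.
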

\begin{proof}
Let~${{(\{X_i:i\in I_1\},T_1=(I_1,F_1))}}\eqqcolon (\mathcal{X},T_1)$ and ${{(\{Y_j:j\in I_2\},T_2=(I_2,F_2))\eqqcolon (\mathcal{Y},T_2)}}$ be minimum width tree decompositions of~$G_1$ and~$G_2$, respectively, having a bag~${{X_s\in\mathcal{X}}}$ containing~$v_1$ and two of its neighbors, say~$x_1$ and $y_1$, and a bag~${{Y_t\in\mathcal{Y}}}$ containing $v_2$ and two of its neighbors. We can assume the existence of such bags because~$v_1$ and~$v_2$ are safe. To verify ${{\tw(G)\leq\max\{\tw(G_1),\tw(G_2)\}}}$, our goal is to define a tree decomposition of width at most~${{\max\{\tw(G_1),\tw(G_2)\}}}$ for $G$. Whereas we have denoted the neighbors of~$v_1$ in bag~$X_s$ by~$x_1$ and~$y_1$ without loss of generality, there are two cases to consider with respect to how the vertices of~$X_s$ and~$Y_t$ are joined by edges in~$G$. Setting~${{F\coloneqq\{x_1x_2,y_1y_2,z_1z_2\}\cap E(G[X_s\cup Y_t])}}$, either~${{|F|=1}}$ or~${{|F|\geq 2}}$.

Let us begin with the case~${{|F|=1}}$, denoting the neighbors of~$v_2$ in~$G_2$ that are contained in~$Y_t$ by~$x_2$ and~$z_2$. Since in~$G$ the vertices~$v_1$ and~$v_2$ do not exist, we may safely replace them. Formally, for each~${{i\in I_1}}$ where~${{v_1\in X_i}}$ set~${{X_i'\coloneqq X_i\setminus\{v_1\}\cup\{z_2\}}}$ and for each~${{i\in I_1}}$ where~${{v_1\notin X_i}}$ set~${{X_i'\coloneqq X_i}}$. Furthermore, for each~${{j\in I_2}}$ where~${{v_2\in Y_j}}$ set~${{Y_j'\coloneqq Y_j\setminus\{v_2\}\cup \{y_1\}}}$ and for each~${{j\in I_2}}$ where~${{v_2\notin Y_j}}$ set~${{Y_j'\coloneqq Y_j}}$. Note that we have not increased the cardinalities of the bags. Now take a new node~${{v\notin I_1\cup I_2}}$ to define the tree ${{T\coloneqq T_1 \cup T_2 + v + sv + vt}}$ as well as the bag~${{X_v\coloneqq\{x_1,x_2,y_1,z_2\}}}$. Because~${{|X_v|=4}}$ and our assumption that~${{\max\{\tw(G_1),\tw(G_2)\}\geq 3}}$, we observe
\begin{equation*}
    \max\big\{\max\limits_{i\in I_1}|X_i|,\max\limits_{j\in I_2}|Y_j|\big\} = \max\big\{\max\limits_{i\in I_1}|X_i'|,\max\limits_{j\in I_2}|Y_j'|,|X_v|\big\}.
\end{equation*}
\begin{figure}[t]
    \centering
    \includegraphics{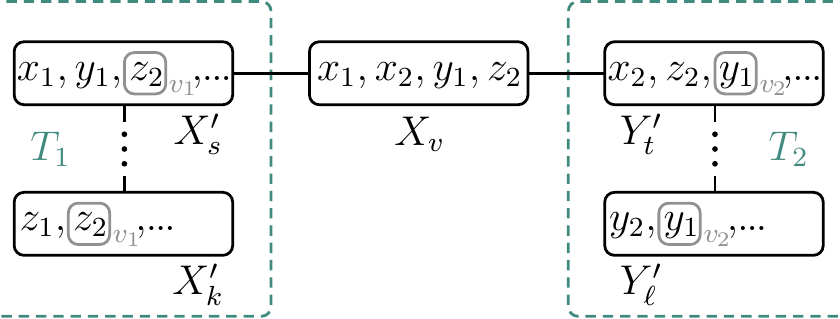}
    \caption{Combining two tree decompositions at bags of safe vertices when~${{|F|=1}}$}
    \label{fig:treewidth_bridge_1}
\end{figure}%
It remains to be checked that~${{D\coloneqq(\{X_i':i\in I_1\}\cup\{Y_j':j\in I_2\}\cup \{X_v\},T)}}$ is a tree decomposition of~$G$. When building the bags of~$D$, the only vertices we removed were~$v_1$ and~$v_2$, which are not present in~$G$. So~$D$ satisfies Condition~\ref{def:trd:i} of Definition~\ref{def:trd}. By the same reason, for each edge in~${{E(G_1)\cup E(G_2)}}$ we find a bag in~$D$ containing its endvertices. Furthermore, by Condition~\ref{def:trd:ii} of Definition~\ref{def:trd}, there must be some~${{k\in I_1}}$ such that ${{v_1,z_1\in X_k}}$, which implies that ${{z_1,z_2\in X_k'}}$. Likewise, there is an~${{\ell\in I_2}}$ such that~${{y_1,y_2\in Y_\ell'}}$. Since the edge~$x_1x_2$ is covered by the bag~$X_v$, this verifies Condition~\ref{def:trd:ii} of Definition~\ref{def:trd}. We have to check Condition~\ref{def:trd:iii} of Definition~\ref{def:trd} essentially for the vertices in~$X_v$. This is because~$T$ by construction is a tree having~$T_1$ and~$T_2$ as subtrees, the only vertices we removed when building~$D$ were~$v_1$ and~$v_2$, and the only vertices we included in some bag were those of~$X_v$. Figure~\ref{fig:treewidth_bridge_1} illustrates the construction of the tree decomposition. Herein, we placed~$z_2$ in every bag that contained~$v_1$, indicated by~$z_2$ in a gray box with subscript~$v_1$. Therefore, we observe that ${{\{i\in I_1:z_2\in X_i'\}}}$ induces a subtree of~$T_1$. Since ${{\{j\in I_2:z_2\in Y_j'\}=\{j\in I_2:z_2\in Y_j\}}}$ induces a subtree of~$T_2$ and~${{z_2\in X_v}}$, we find that the nodes whose bags contain~$z_2$ induce a subtree of~$T$. By investigating Figure~\ref{fig:treewidth_bridge_1}, we can argue similarly for the remaining vertices of~$X_v$.

For the case~${{|F|=2}}$, denote the neighbors of~$v_2$ in~$G_2$ that are contained in~$Y_t$ by~$x_2$ and~$y_2$. For each~${{i\in I_1}}$ where~${{v_1\in X_i}}$ set~${{X_i'\coloneqq X_i\setminus\{v_1\}\cup \{z_1\}}}$ and for each~${{i\in I_1}}$ where~${{v_1\notin X_i}}$ set~${{X_i'\coloneqq X_i}}$. Likewise, for each~${{j\in I_2}}$ where~${{v_2\in Y_j}}$ set~${{Y_j'\coloneqq Y_j\setminus\{v_2\}\cup \{z_1\}}}$ and for each~${{j\in I_2}}$ where~${{v_2\notin Y_j}}$ set~${{Y_j'\coloneqq Y_j}}$. For two new nodes~${{v,w\notin I_1\cup I_2}}$ define the tree ${{T\coloneqq T_1 \cup T_2 + v + w + sv + vw + wt}}$ as well as the bags~${{X_v\coloneqq\{x_1,y_1,y_2,z_1\}}}$ and ${{X_w\coloneqq\{x_1,x_2,y_2,z_1\}}}$. This defines a tree decomposition of width at most~${{\max\{\tw(G_1),\tw(G_2)\}}}$, which can be checked by investigating Figure~\ref{fig:treewidth_bridge_2}, analogous to the previous case.
\begin{figure}[t]
    \centering
    \includegraphics{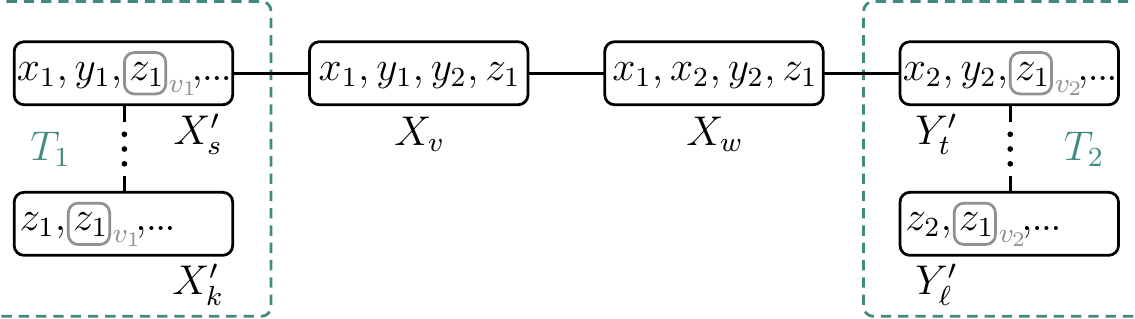}
    \caption{Combining two tree decompositions at bags of safe vertices when~${{|F|=2}}$}
    \label{fig:treewidth_bridge_2}
\end{figure}%

For the other inequality, note that both~$G_1$ and~$G_2$ are minors of~$G$. For example, contracting all vertices in~$G$ that stem from~$G_2$ to a single vertex yields~$G_1$. This implies~${{\tw(G)\geq\max\{\tw(G_1),\tw(G_2)\}}}$ by Lemma~\ref{lem:minors}, which concludes our proof.
\end{proof}

Quite a few difficult combinatorial problems on graphs can be solved in polynomial, or even linear, time by dynamic programming approaches if the input graph has bounded treewidth, about which Bodlaender and Koster~\cite{bodlaender2008combinatorial} give an overview. This makes statements such as that of Theorem~\ref{thm:treewidth} useful. In what follows, however, we will encounter situations where we cannot assume the vertices involved in our bridge construction to be safe. Nevertheless, there are some tools that will help us to show that extremal uniformly \mbox{$3$-connected} graphs have bounded treewidth. To this end, let us recall the notion of a \emph{line graph}~$L(G)$ of a graph~$G$. This is the graph on vertex set~$E(G)$ whose vertices are adjacent exactly when they are incident in~$G$.

\begin{lemma}\label{lem:twLG}
For every graph $G$, we have
\begin{equation*}
    \tw(G)\leq 2\tw(L(G))+1.
\end{equation*}
\end{lemma}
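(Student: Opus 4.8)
The plan is to start from a minimum-width tree decomposition $(\{Z_i:i\in I\},T=(I,F))$ of the line graph $L(G)$, of width $w\coloneqq\tw(L(G))$, and to turn it into a tree decomposition of $G$ by translating edge-bags into vertex-bags. The natural map is the incidence relation: for each node $i\in I$ define
\begin{equation*}
    X_i\coloneqq\{v\in V(G): v\text{ is incident in }G\text{ to some }e\in Z_i\}=\bigcup_{e\in Z_i}e.
\end{equation*}
Since each edge $e\in E(G)$ contributes exactly two endpoints, we immediately get $|X_i|\le 2|Z_i|\le 2(w+1)$, so the width of $\big(\{X_i:i\in I\},T\big)$ is at most $2(w+1)-1=2w+1$, which is exactly the bound claimed. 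What remains is to check that this really is a tree decomposition of $G$, i.e.\ that it satisfies the three conditions of Definition~\ref{def:trd}.

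Conditions~\ref{def:trd:i} and~\ref{def:trd:ii} are the easy ones. For the covering-of-vertices condition, I would note that (assuming $G$ has no isolated vertices, which is harmless since isolated vertices can be thrown into any single bag, or handled by a trivial separate remark) every $v\in V(G)$ is an endpoint of some edge $e$, and $e$ lies in some bag $Z_i$ of the line-graph decomposition, whence $v\in X_i$. For the covering-of-edges condition, given $vw\in E(G)$, the single vertex $vw$ of $L(G)$ lies in some bag $Z_i$, and then both $v$ and $w$ belong to $X_i$ by construction.

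The connectivity condition~\ref{def:trd:iii} is the one that needs an argument and is the main obstacle. Fix a vertex $v\in V(G)$; I want to show $I_v\coloneqq\{i\in I: v\in X_i\}$ induces a subtree of $T$. Observe that $v\in X_i$ exactly when $Z_i$ contains at least one edge incident to $v$, i.e.\ $Z_i$ meets the set $E_v\coloneqq\{e\in E(G): v\in e\}$. The crucial structural fact is that $E_v$ is a clique in $L(G)$ (all edges through $v$ are pairwise incident), so by Lemma~\ref{lem:cliques} there is a node $i_0$ with $E_v\subseteq Z_{i_0}$. Now for each individual edge $e\in E_v$, the set $\{i: e\in Z_i\}$ is a subtree by Condition~\ref{def:trd:iii} for the $L(G)$-decomposition, and each such subtree contains $i_0$; hence these subtrees pairwise intersect, and $I_v=\bigcup_{e\in E_v}\{i: e\in Z_i\}$ is a union of subtrees sharing the common node $i_0$, which is therefore connected. (Concretely: any $i\in I_v$ is connected to $i_0$ within a single edge-subtree, so the subgraph of $T$ induced on $I_v$ is connected, and being an induced subgraph of a tree it is a subtree.) This completes the verification, and the width bound gives $\tw(G)\le 2w+1=2\tw(L(G))+1$. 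I expect the only subtlety worth a sentence in the writeup is the isolated-vertex edge case and the explicit invocation of Lemma~\ref{lem:cliques} to get the common node $i_0$; everything else is bookkeeping.
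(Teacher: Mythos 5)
Your proof is correct. Note that the paper does not prove this lemma at all --- it simply cites Harvey and Wood for it --- so there is no in-paper argument to compare against; what you have written is the standard self-contained proof, and it holds up. The width bound $|X_i|\le 2|Z_i|\le 2(\tw(L(G))+1)$ is right, Conditions~\ref{def:trd:i} and~\ref{def:trd:ii} follow exactly as you say, and your handling of Condition~\ref{def:trd:iii} is the real content: the set $E_v$ of edges at $v$ is a clique in $L(G)$, so Lemma~\ref{lem:cliques} supplies a common node $i_0$ lying in every subtree $\{i: e\in Z_i\}$ with $e\in E_v$, and a union of subtrees of $T$ through a common node induces a connected, hence tree, subgraph. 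It is a nice touch that the clique-in-a-bag fact you need is precisely Lemma~\ref{lem:cliques}, already stated in the paper for other purposes. The only loose end is the degenerate case of isolated vertices (and of an edgeless $G$, where $L(G)$ is empty and the inequality is vacuous or convention-dependent); attaching each isolated vertex as a singleton bag on a new leaf node, rather than inserting it into an existing bag, avoids even a formal worry about exceeding the width bound. Everything else is, as you say, bookkeeping.
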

This bound and related results are presented by Harvey and Wood~\cite{harvey2018treewidth}. Furthermore, Bodlaender, Van Leeuwen, Tan, and Thilikos~\cite{bodlaender1997interval} give the following relation.
\begin{lemma}\label{lem:cliquesum}
Let~$G_1$ and~$G_2$ be two graphs containing cliques~${{S\subseteq V(G_1)}}$ and~${{T\subseteq V(G_2)}}$ with~${{|S|=|T|}}$ and let~$G$ be a \emph{clique-sum} of~$G_1$ and~$G_2$, meaning a graph obtained by taking the disjoint union of~$G_1$ and~$G_2$ and identifying~$S$ and~$T$. Then
\begin{equation*}
    \tw(G) = \max\{\tw(G_1),\tw(G_2)\}.
\end{equation*}
\end{lemma}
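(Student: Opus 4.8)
The plan is to prove the two inequalities separately, the lower bound being immediate and the upper bound carrying all the content.

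For $\tw(G) \ge \max\{\tw(G_1),\tw(G_2)\}$ I would simply note that, since the clique-sum is formed by a disjoint union followed by an identification, with no edges deleted, and since $S$ and $T$ are already cliques, both $G_1$ and $G_2$ appear as subgraphs --- hence as minors --- of $G$. Lemma~\ref{lem:minors} then yields $\tw(G)\ge\tw(G_1)$ and $\tw(G)\ge\tw(G_2)$ at once.

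For the upper bound, fix minimum width tree decompositions $(\{X_i:i\in I_1\},T_1=(I_1,F_1))$ of $G_1$ and $(\{Y_j:j\in I_2\},T_2=(I_2,F_2))$ of $G_2$ over disjoint index sets, and let $\phi\colon S\to T$ be the bijection along which the sum is taken, so that in $G$ each $v\in S$ is identified with $\phi(v)\in T$. Because $S$ is a clique in $G_1$, Lemma~\ref{lem:cliques} provides a node~$s\in I_1$ with $S\subseteq X_s$, and likewise a node~$t\in I_2$ with $T\subseteq Y_t$. Now relabel the second decomposition by replacing, in every bag $Y_j$, each vertex $u\in T$ by $\phi^{-1}(u)\in S$ --- this changes no bag cardinality --- and glue the two trees by adding a single edge, forming $T\coloneqq T_1\cup T_2+st$. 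The candidate tree decomposition of $G$ consists of all the (relabeled) bags indexed by the nodes of~$T$. Verifying Definition~\ref{def:trd} is then routine: coverage of vertices holds since every vertex of $G$ lies in a bag of one side; coverage of edges holds since every edge of $G$ lies in $E(G_1)$ or $E(G_2)$ --- there being no edge of $G$ between $V(G_1)\setminus S$ and $V(G_2)\setminus T$ --- and relabeling only renames shared vertices; and for the subtree condition, a vertex of $V(G_1)\setminus S$ or $V(G_2)\setminus T$ occurs on one side only, where the condition is inherited, while a shared vertex $v\in S$ occupies a subtree of $T_1$ through~$s$ and a subtree of $T_2$ through~$t$, and the new edge~$st$ joins these into one subtree. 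The width of the glued decomposition is $\max\{\tw(G_1),\tw(G_2)\}$, so $\tw(G)\le\max\{\tw(G_1),\tw(G_2)\}$, and together with the lower bound we obtain equality.

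The single point that genuinely requires care --- and the only real idea in the argument --- is the use of Lemma~\ref{lem:cliques} to produce the \emph{single} nodes $s$ and $t$ whose bags contain the whole glued clique; without them one could not attach the two trees by one edge while keeping the subtree condition across all of $S$. Everything else is bookkeeping, so I would keep the write-up short: display the relabeling and the tree $T_1\cup T_2+st$, and leave the mechanical check of Definition~\ref{def:trd} to the reader, since it parallels the verifications already carried out in the proof of Theorem~\ref{thm:treewidth}.
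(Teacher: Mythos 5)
Your proof is correct, but note that the paper does not prove this lemma at all: it is quoted as a known result and attributed to Bodlaender, van~Leeuwen, Tan, and Thilikos, so there is no in-paper argument to compare against. What you give is the standard gluing proof, and it is sound: the lower bound is immediate since the paper's definition of clique-sum involves no edge deletion and $S$, $T$ are cliques, so $G_1$ and $G_2$ sit inside $G$ as subgraphs; for the upper bound, Lemma~\ref{lem:cliques} is exactly the right tool to obtain single nodes $s$ and $t$ whose bags contain the whole glued clique, and since every shared vertex then lies in $X_s$ and (after relabeling) in $Y_t$, its occurrence sets on the two sides are subtrees through $s$ and $t$ respectively, which the new edge joins into one subtree. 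This parallels the bag-surgery the paper does explicitly in the proof of Theorem~\ref{thm:treewidth}, so your level of detail is appropriate. One cosmetic point: you reuse the symbol $T$ both for the clique in $G_2$ and for the glued tree $T_1\cup T_2+st$; rename one of them before writing this up.
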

\begin{figure}[t]
    \centering
    \includegraphics{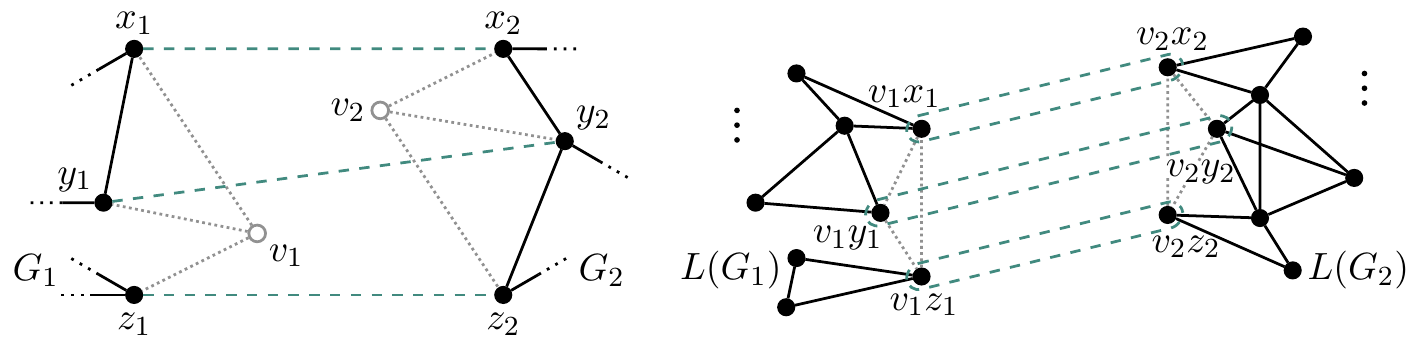}
    \caption{The bridge operation and its effect on the corresponding line graphs}
    \label{fig:bridge_linegraph}
\end{figure}%
\begin{lemma}\label{lem:bridgeclique-sum}
Consider two graphs~$G_1$ and~$G_2$ with vertices ${{v_1\in V(G_1)}}$ and ${{v_2\in V(G_2)}}$ whose neighborhoods are ${{N(v_1)=\{x_1,y_1,z_1\}}}$ and ${{N(v_2)=\{x_2,y_2,z_2\}}}$ and a graph
\begin{equation*}
    G\coloneqq(G_1-v_1)\cup (G_2-v_2) + x_1x_2 + y_1y_2 + z_1z_2.
\end{equation*}
Furthermore, let~$H$ be a clique-sum of $L(G_1)$ and $L(G_2)$ formed by identifying ${{E(\{v_1\},V(G_1)\setminus\{v_1\})}}$ and ${{E(\{v_2\},V(G_2)\setminus\{v_2\})}}$. Then $L(G)$ is a (proper) subgraph of~$H$.
\end{lemma}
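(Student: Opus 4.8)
The plan is to produce a bijection between $V(L(G))$ and $V(H)$ under which the edges of $L(G)$ become a proper subset of the edges of $H$. Recall that $V(L(G)) = E(G)$ and that $E(G)$ is the disjoint union of the edges of $G_1$ not incident with $v_1$, the edges of $G_2$ not incident with $v_2$, and the three new edges $x_1x_2$, $y_1y_2$, $z_1z_2$. A preliminary observation, used throughout, is that the six vertices $x_1, y_1, z_1, x_2, y_2, z_2$ are pairwise distinct: the first three are distinct neighbors of $v_1$ and hence lie in $V(G_1)\setminus\{v_1\}$, the last three analogously lie in $V(G_2)\setminus\{v_2\}$, and these two sets are disjoint because the bridge operation takes a disjoint union of $G_1 - v_1$ and $G_2 - v_2$. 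On the other side, following the definition of clique-sum, $V(H)$ arises from the disjoint union of $L(G_1)$ and $L(G_2)$ by identifying the triangle $\{v_1x_1, v_1y_1, v_1z_1\}$ of $L(G_1)$ with the triangle $\{v_2x_2, v_2y_2, v_2z_2\}$ of $L(G_2)$ edge for edge as matched by the bridge operation, that is, $v_1x_1$ with $v_2x_2$ and so on; write $a, b, c$ for the three resulting identified vertices. I would let $\phi\colon V(L(G)) \to V(H)$ act as the identity on edges avoiding both $v_1$ and $v_2$ and send $x_1x_2, y_1y_2, z_1z_2$ to $a, b, c$ respectively; one checks that $\phi$ is a bijection.

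Next I would verify, by a short case distinction on the two endpoints, that $\phi$ maps $E(L(G))$ into $E(H)$. The key point is that $G$ is obtained from $G_1$ and $G_2$ only by deleting $v_1$ and $v_2$ and adding the three edges $x_1x_2, y_1y_2, z_1z_2$, with no vertices merged or introduced. Hence: two edges of $G_1 - v_1$ are incident in $G$ exactly when they are incident in $G_1$, and likewise for $G_2$, so such adjacencies are carried by $\phi$ into $L(G_1) \subseteq H$, respectively $L(G_2) \subseteq H$; an edge of $G_1 - v_1$ and an edge of $G_2 - v_2$ are never incident in $G$ because their endpoints lie in the disjoint sets $V(G_1)\setminus\{v_1\}$ and $V(G_2)\setminus\{v_2\}$, matching the fact that $\phi$ places them on opposite sides of the clique-sum; an edge $e$ of $G_1 - v_1$ is incident with $x_1x_2$ in $G$ if and only if $e$ meets $x_1$ (it cannot meet $x_2 \in V(G_2)$), which holds if and only if $e$ is incident with $v_1x_1$ in $G_1$, i.e.\ if and only if $\phi(e) = e$ is adjacent to $\phi(x_1x_2) = a$ in $L(G_1) \subseteq H$, with the analogous statements for $b, c$ and symmetrically on the $G_2$ side; and finally the three new edges $x_1x_2, y_1y_2, z_1z_2$ are pairwise vertex-disjoint by the preliminary observation, hence pairwise non-adjacent in $L(G)$. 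This shows $L(G)$ is a subgraph of $H$, and in fact the same case analysis identifies $E(H)$ minus the image of $E(L(G))$ as precisely the set of edges among $a, b, c$.

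For properness it then suffices to note that $a, b, c$ do span a triangle in $H$: they come from the three edges at the single vertex $v_1$ of $G_1$, which form a clique in $L(G_1)$, and clique-sums preserve the edges of the summands. Since the $\phi$-preimages of $a, b, c$ are pairwise non-adjacent in $L(G)$, it follows that $L(G)$ is obtained from $H$ by deleting a triangle and is therefore a proper subgraph. The only delicate part is the bookkeeping in the case analysis---checking that every incidence in $G$ is accounted for by an edge of $H$, and conversely pinning down exactly which edges of $H$ are absent from $L(G)$---but this all reduces to the disjointness of $V(G_1 - v_1)$ and $V(G_2 - v_2)$ together with the fact that the bridge operation creates no incidences beyond those listed above.
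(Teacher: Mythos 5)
Your proof is correct and follows essentially the same route as the paper's: identifying the three new edges $x_1x_2$, $y_1y_2$, $z_1z_2$ with the clique-sum-identified vertices and observing that the only edges of $H$ missing from $L(G)$ are the triangle on those three vertices, which is exactly why the containment is proper. The paper's own proof is a two-sentence appeal to its Figure~\ref{fig:bridge_linegraph}; you have simply written out the bijection and case analysis that the figure encodes.
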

\begin{proof}
When forming~$G$ by the bridge operation, adding the edges~$x_1x_2$,~$y_1y_2$, and~$z_1z_2$ corresponds to identifying $v_1x_1$ with $v_2x_2$, $v_1y_1$ with $v_2y_2$, and $v_1z_1$ with $v_2z_2$ in the respective line graphs, as is indicated by dashed green lines in Figure~\ref{fig:bridge_linegraph}. Deleting~$v_1$ and~$v_2$ when forming~$G$ by the bridge operation removes the cliques, indicated by dotted gray lines in Figure~\ref{fig:bridge_linegraph}, at which the clique-sum of $L(G_1)$ and $L(G_2)$ is formed. This is why $L(G)$ is a \emph{proper} subgraph of~$H$.
\end{proof}
\begin{theorem}\label{thm:twbridgeclass}
Let~$\mathcal{C}$ be a class of graphs which arises by successively taking the bridge operation to join graphs from a base class whose line graphs have treewidth bounded by $w$. Then for every graph~$G\in\mathcal{C}$, we have
\begin{equation*}
    \tw(G)\leq 2w+1.
\end{equation*}
\end{theorem}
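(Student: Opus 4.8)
The plan is to prove the statement by induction on the number of bridge operations used to construct~$G\in\mathcal{C}$, with the key structural input being Lemmas~\ref{lem:twLG}, \ref{lem:cliquesum}, and~\ref{lem:bridgeclique-sum}. The base case is when~$G$ belongs to the base class itself; then~$\tw(L(G))\leq w$ by hypothesis, and Lemma~\ref{lem:twLG} immediately gives~$\tw(G)\leq 2\tw(L(G))+1\leq 2w+1$. So the real content is the induction step, and the natural reformulation is to prove the stronger auxiliary claim that~$\tw(L(G))\leq w$ for \emph{every}~$G\in\mathcal{C}$; the desired bound on~$\tw(G)$ then follows in one stroke from Lemma~\ref{lem:twLG} at the very end.

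For the induction step I would write~$G$ as the result of applying the bridge operation to graphs~$G_1,G_2$, each obtainable from the base class by fewer bridge operations, so that by the induction hypothesis~$\tw(L(G_1))\leq w$ and~$\tw(L(G_2))\leq w$. Let~$H$ be the clique-sum of~$L(G_1)$ and~$L(G_2)$ described in Lemma~\ref{lem:bridgeclique-sum}, formed by identifying the two triangles~$E(\{v_1\},V(G_1)\setminus\{v_1\})$ and~$E(\{v_2\},V(G_2)\setminus\{v_2\})$. Lemma~\ref{lem:cliquesum} applies because these edge sets are indeed cliques of equal size (size three) in the respective line graphs, giving~$\tw(H)=\max\{\tw(L(G_1)),\tw(L(G_2))\}\leq w$. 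By Lemma~\ref{lem:bridgeclique-sum}, $L(G)$ is a subgraph of~$H$, and treewidth is monotone under subgraphs (a special case of Lemma~\ref{lem:minors}, since a subgraph is a minor), so~$\tw(L(G))\leq\tw(H)\leq w$. This closes the induction on the auxiliary claim, and one final application of Lemma~\ref{lem:twLG} yields~$\tw(G)\leq 2w+1$ for all~$G\in\mathcal{C}$.

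The step I expect to be the most delicate is the verification that the two identified edge sets are genuinely cliques of the same size in the line graphs, so that Lemma~\ref{lem:cliquesum} is applicable as stated. Since~$\deg(v_1)=3$ in the relevant setting (and likewise~$\deg(v_2)=3$), the set~$E(\{v_1\},V(G_1)\setminus\{v_1\})$ consists of exactly three edges, all pairwise incident at~$v_1$, hence a triangle in~$L(G_1)$, and symmetrically for~$v_2$; this is essentially the picture in Figure~\ref{fig:bridge_linegraph}. One should also note that Lemma~\ref{lem:bridgeclique-sum} is phrased for a specific bridge-operation setup, so I would make sure the induction is set up so that every graph in~$\mathcal{C}$ is reached by bridge operations in which the relevant vertices have degree three, which is exactly the form the bridge operation takes in Theorem~\ref{thm:constr}\eqref{thm:constr:ii}. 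Beyond that, the argument is a routine induction, and no calculation is needed.
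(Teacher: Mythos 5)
Your proof is correct and follows exactly the route the paper intends: the paper's own proof is the one-line remark that the result ``follows directly from Lemmas~\ref{lem:twLG}, \ref{lem:cliquesum}, and~\ref{lem:bridgeclique-sum},'' and your induction on the number of bridge operations, with the strengthened hypothesis~$\tw(L(G))\leq w$, is precisely the argument being elided there. Your attention to why the identified edge sets are triangles in the line graphs is a sensible filling-in of detail, not a deviation.
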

\begin{proof}
This follows directly from Lemmas~\ref{lem:twLG},~\ref{lem:cliquesum}, and~\ref{lem:bridgeclique-sum}.
\end{proof}

\section{Applications}\label{sec:appl}

Let us proceed with an example that illustrates how to use Equations~\eqref{eqn:1} to~\eqref{eqn:4}, which we obtained in the course of our proof of Theorem~\ref{thm:bound}, to get a precise picture of extremal uniformly $3$-connected graphs.

\begin{example} \textnormal{Let us ask for the graphs on~${{n=10}}$ vertices with minimum number of vertices of minimum degree. Condition~\eqref{eqn:4} tells us that the extremal graphs are those where $p$ is maximal. In view of Condition~\eqref{eqn:3}, we choose~${{p = 2}}$. Condition~\eqref{eqn:1} then reads~${{4 = 2@t + 2@j + s}}$ and by Condition~\eqref{eqn:2}, we obtain~${{j \geq 1}}$. This leaves us exactly with the settings where~$p=2$ and
\begin{equation*}
    t=1, j=1, s=0\quad\text{or}\quad t=0, j=2, s=0\quad\text{or}\quad t=0, j=1, s=2.
\end{equation*}
A graph for the setting ${{t=1, j=1, p=2, s=0}}$ is illustrated in Figure~\ref{fig:extremalnonplanar}.}
\begin{figure}[b]
    \centering
    \includegraphics{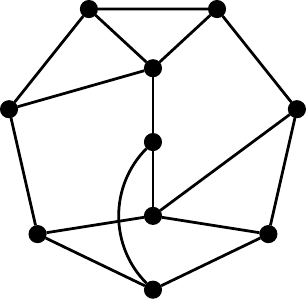}
    \caption{An extremal uniformly $3$-connected graph on ten vertices}
    \label{fig:extremalnonplanar}
\end{figure}
\end{example}
In what follows, we shall generalize the findings from this example, and so identify the conditions under which extremal uniformly $3$-connected graphs are planar.
\begin{theorem}\label{thm:numbers}
Given an extremal uniformly $3$-connected graph on~${{n=3@k+\ell\geq 5}}$ vertices, for some ${{k\in\mathbb{N}\setminus\{1\}}}$ and~${{\ell\in\{-1,0,1\}}}$, let~$j,t,p$, and~$s$ be the respective numbers of bridge operations, edge joins, primary and secondary spoke operations involved in constructing~$G$.\newpage
\begin{enumerate}
    \item Then~$p=k-1$.\label{thm:numbers:i}
    \item If $\ell=-1$, then $j=k-2$,~$t=s=0$.\label{thm:numbers:ii}
    \item If $\ell=\textcolor{white}{-}0$, then ${{j=k-2}}$,~${{t=0}}$,~${{s=1}}$.\label{thm:numbers:iii}
    \item If $\ell=\textcolor{white}{-}1$, then ${{j=k-1}}$,~${{t=s=0}}$ or ${{j=k-2}}$,~${{t=1}}$,~${{s=0}}$\\
    \phantom{If $\ell=\textcolor{white}{-}1$, then ${{j=k-1}}$,~${{t=s=0}}$ }or ${{j=k-2}}$,~${{t=0}}$,~${{s=2}}$.\label{thm:numbers:iv}
\end{enumerate}
\end{theorem}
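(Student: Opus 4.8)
The plan is to run a case analysis on $\ell \in \{-1,0,1\}$, combining the counting identity~\eqref{eqn:1} with the inequalities~\eqref{eqn:2},~\eqref{eqn:3},~\eqref{eqn:4} that were extracted in the proof of Theorem~\ref{thm:bound}. For part~\eqref{thm:numbers:i}, since $G$ is extremal we have equality in~\eqref{eqn:4}, i.e. $\nu(G) = n - p = \lceil(2n+2)/3\rceil$, which forces $p = \lfloor(n-2)/3\rfloor$, the upper bound from~\eqref{eqn:3}. Substituting $n = 3k + \ell$ gives $p = \lfloor(3k + \ell - 2)/3\rfloor = k - 1$ for each $\ell\in\{-1,0,1\}$, since $\ell - 2 \in \{-3,-2,-1\}$; this settles~\eqref{thm:numbers:i}.

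For parts~\eqref{thm:numbers:ii}--\eqref{thm:numbers:iv} I would feed $p = k-1$ back into~\eqref{eqn:1}, which reads $3k + \ell = 4 + 2j + 2t + (k-1) + s$, i.e.
\begin{equation*}
    2j + 2t + s = 2k + \ell - 3.
\end{equation*}
Simultaneously, equality must hold in the chain~\eqref{eqn:3}; tracing that chain backwards, $n \ge 2 + 2t + 3p + s \ge 2 + 3p$ collapsing to equality forces both $2t = 0$ contributions to vanish at the second step — more precisely $2t + 2j - (2p - 2) + s$ and the slack $2t$ must be controlled. The cleaner route: from~\eqref{eqn:2} we have $2j \ge 2p - 2 = 2k - 4$, and the displayed identity gives $2j = 2k + \ell - 3 - 2t - s \le 2k + \ell - 3$. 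Combining, $2k - 4 \le 2j \le 2k + \ell - 3$, so $2t + s \le \ell + 1$. Now split on $\ell$: if $\ell = -1$ then $2t + s \le 0$, forcing $t = s = 0$ and hence $2j = 2k - 4$, i.e. $j = k-2$, giving~\eqref{thm:numbers:ii}; if $\ell = 0$ then $2t + s \le 1$, so $(t,s) \in \{(0,0),(0,1)\}$, and I must rule out $(0,0)$ — here I would invoke a parity observation, namely that with $t = s = 0$ the identity $2j = 2k - 3$ has no integer solution, leaving $(t,s) = (0,1)$ and $j = k-2$, which is~\eqref{thm:numbers:iii}; if $\ell = 1$ then $2t + s \le 2$, so $(t,s) \in \{(0,0),(1,0),(0,1),(0,2)\}$, and again parity of $2j = 2k - 2 - 2t - s$ kills $(0,1)$, leaving the three stated possibilities with $j = k-1$ (for $(t,s)=(0,0)$) or $j = k-2$ (for $(1,0)$ and $(0,2)$), which is~\eqref{thm:numbers:iv}.

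The main obstacle I anticipate is not the arithmetic but justifying that \emph{every} surviving numeric tuple is actually realized by some extremal graph — Theorem~\ref{thm:numbers} as stated only asserts that these are the possible operation counts, so strictly I only need the forward (necessity) direction, and the parity/inequality bookkeeping above suffices. If realizability were also wanted, one would need to exhibit constructions, as the Example preceding the theorem does for $n = 10$. A secondary subtlety is the hypothesis $k \in \mathbb{N}\setminus\{1\}$ and $n \ge 5$: I should check that $k \ge 2$ guarantees $j = k - 2 \ge 0$ and $p = k - 1 \ge 1$ so that all counts are non-negative, and note that the case $k = 2$, $\ell = -1$ (i.e. $n = 5$) gives $j = 0$, $p = 1$, $t = s = 0$, which must correspond to a genuine extremal graph on five vertices (one primary spoke on $K_4$); this edge case is worth a sentence to confirm the bounds degrade gracefully.
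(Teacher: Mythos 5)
Your proposal is correct and follows essentially the same route as the paper: extremality forces $p=\lfloor(n-2)/3\rfloor=k-1$, then $j\geq p-1$ from~\eqref{eqn:2} substituted into~\eqref{eqn:1} gives $2t+s\leq \ell+1$, and the case analysis on $\ell$ (with the parity of $2j$ making the eliminations explicit, which the paper leaves implicit) yields exactly the stated tuples. The only misstep is the abandoned remark that equality must hold throughout the chain~\eqref{eqn:3} --- it need not, e.g.\ for $\ell=1$ one has $n=3k+1>2+3p$ --- but your ``cleaner route'' does not rely on it and is the argument the paper itself uses.
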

\begin{proof}
In view of Conditions~\eqref{eqn:3} and~\eqref{eqn:4}, building an extremal graph involves
\begin{equation*}
    p=\lfloor(n-2)/3\rfloor=\lfloor(3@k+\ell-2)/3\rfloor=k+\lfloor(\ell-2)/3\rfloor=k-1
\end{equation*} primary spoke operations. Thus Statement~\ref{thm:numbers:i} holds. Condition~\eqref{eqn:2} requires that ${{j\geq p-1= k-2}}$ and so Condition~\eqref{eqn:1} tells us that
\begin{align*}
                n &= 4 + 2@t + 2@j + p + s \\
    \Rightarrow 3@k+\ell &\geq 4 + 2@t + 2@(k-2) + k-1 + s \\
    \Rightarrow \phantom{3@k+\ell}\negphantom{1+\ell}1+\ell &\geq 2@t + s.
\end{align*}
For~${{\ell=-1}}$, we obtain~${{j=k-2}}$,~${{t=s=0}}$, which is Statement~\ref{thm:numbers:ii}. For~${{\ell=0}}$, we obtain~${{j=k-2}}$,~${{t=0}}$, ${{s=1}}$, which is Statement~\ref{thm:numbers:iii}. For~${{\ell=1}}$ and~${{j=k-2}}$, we obtain~${{t=0}}$ and~${{s=2}}$ or~${{t=1}}$ and~${{s=0}}$, which are the last two alternatives in Statement~\ref{thm:numbers:iv}. If~${{\ell=1}}$ and~${{j=k-1}}$, then Condition~\eqref{eqn:1} implies~$t=s=0$, which is the remaining alternative in Statement~\ref{thm:numbers:iv}. Finally, note that $j$ cannot be larger than ${{k-1}}$, since otherwise the right hand side of Equation~\eqref{eqn:1} exceeds the left hand side.
\end{proof}

Let us see what we now know about small extremal uniformly $3$-connected graphs.
\begin{observation}\label{obs}
\textnormal{The \emph{wheel} graph on~${{n\geq 4}}$ vertices is the graph resulting from a cycle on~${{n-1}}$ vertices by adding a new vertex which is adjacent to all other vertices. We denote such a graph by~$W_n$. The graph~$W_4$ is complete and all its vertices have degree three. Performing a primary spoke operation on~$W_4$ results in~$W_{5}$. Similarly, performing a secondary spoke operation on $W_n$ results in~$W_{n+1}$ for all~${{n\in\mathbb{N}}}$.}
\begin{figure}[t]
    \centering
    \includegraphics{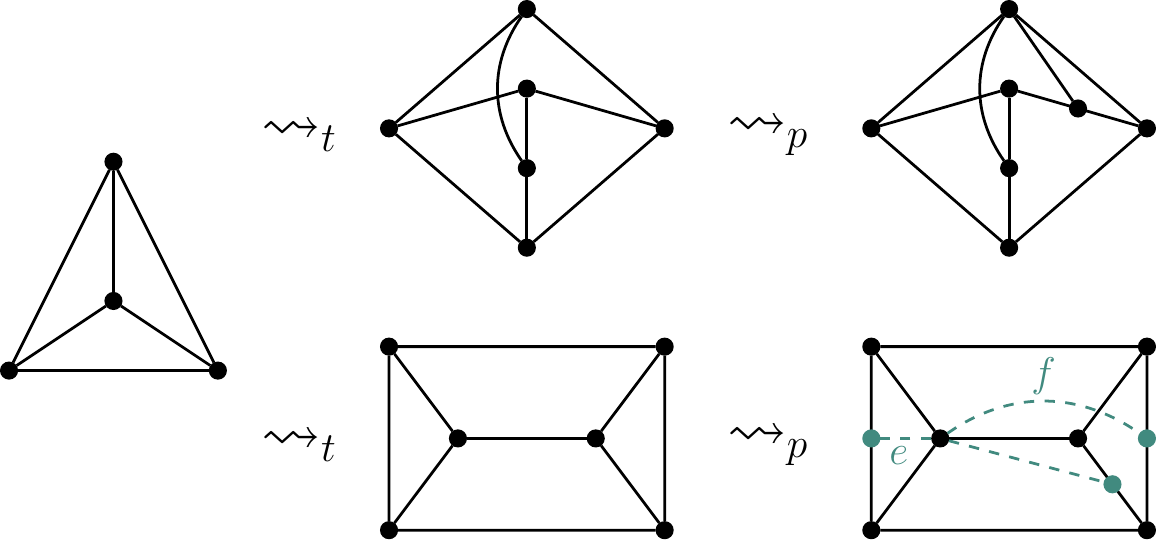}
    \caption{Small extremal uniformly $3$-connected graphs built out of a wheel graph by edge joins ($\rightsquigarrow_t$) and primary spoke operations ($\rightsquigarrow_p$) }
    \label{fig:smallextremal}
\end{figure}

\textnormal{Let us consider an extremal uniformly $3$-connected graph~$G$ in whose construction an edge join is involved. Recall that edge joins in Tutte's characterization~\cite{tutte1966connectivity}, and so in Theorem~\ref{thm:constr}, are only allowed to be applied on $3$-regular $3$-connected graphs. It is not hard to see that extremal uniformly $3$-connected graphs are nonregular for all~${{n\geq 5}}$. So when an edge join is involved in building~$G$, it can only take the graph~$W_4$ as input. This can only produce the complete bipartite graph~$K_{3,3}$ or the envelope graph, depicted in the middle of Figure~\ref{fig:smallextremal}. Out of those graphs, we can obtain the graphs on the right in Figure~\ref{fig:smallextremal} by a primary spoke operation. The dashed green edges drawn in the bottom right graph are to be understood as alternatives. They indicate the three nonisomorphic graphs that can be built out of the envelope graph by a primary spoke operation. In fact, one can check that the alternative where edge~$f$ is added to the envelope graph is isomorphic to the top right graph in Figure~\ref{fig:smallextremal}. The alternative where edge~$e$ is added to the envelope graph is isomorphic to  the graph which results from combining the wheel graphs~$W_4$ and~$W_5$ by the bridge operation. Similarly, the envelope graph can be combined out of two wheel graphs~$W_4$ by the bridge operation. So nonplanar graphs might arise even if we forbid edge joins.}
\end{observation}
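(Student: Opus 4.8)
The plan is to verify the assertions of Observation~\ref{obs} in four groups: the identity $W_4=K_4$ together with the two wheel recursions; the nonregularity of extremal graphs for $n\ge 5$; the claim that an edge join occurring in the construction of an extremal graph can only act on $W_4$, and what it then produces; and finally the isomorphisms recorded in Figure~\ref{fig:smallextremal}, from which the closing nonplanarity statement follows. The first two groups are straightforward: that $W_4=K_4$ is three-regular is immediate from the definition, and for the recursions one simply carries out the spoke operation on $W_n$ taking the hub as the distinguished vertex $x$ and any rim edge as $vw$, so that the hub becomes adjacent to the new vertex $y$ and to the former rim vertices, which now form a cycle, leaving $W_{n+1}$; this raises the hub's degree to $4$ when $n=4$ (a primary spoke, since the hub then had degree $3$) and keeps it above $3$ when $n\ge 5$ (a secondary spoke). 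For nonregularity, a three-regular graph on $n$ vertices has all $n$ of its vertices of minimum degree, whereas an extremal graph has $\lceil(2n+2)/3\rceil$ of them, and $\lceil(2n+2)/3\rceil<n$ precisely when $n\ge 5$; since three-connectivity forces minimum degree at least $3$, an extremal graph on $n\ge 5$ vertices must have a vertex of degree exceeding $3$.

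The heart of the argument is the claim about edge joins, and the key structural fact is this: a primary spoke operation creates a vertex of degree $4$ that is preserved by every subsequent bridge operation, edge join, and secondary spoke operation, while an edge join (being part of Tutte's construction) requires a three-regular three-connected input; consequently, within any of the $j+1$ \emph{input blocks} into which a construction using $j$ bridge operations decomposes---each block being built from a single $K_4$ by a chain of unary operations---every edge join must precede the at most one primary spoke of that block. Now let $G$ be extremal with an edge join in its construction. By Theorem~\ref{thm:numbers} this forces $t=1$, $s=0$, and $p=j+1$, so each of the $j+1$ input blocks uses exactly one primary spoke; the unique edge join lies in one such block $B$, and since $s=0$ and $t=1$, the block $B$ consists of precisely one edge join and one primary spoke and nothing else. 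By the structural fact the edge join comes first, hence it is applied to the initial $K_4=W_4$. Finally, $K_4$ has, up to automorphism, exactly two unordered pairs of distinct edges---a matching and an incident pair---and a direct computation shows the edge join yields $K_{3,3}$ in the first case and the triangular prism (the envelope graph) in the second.

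For the last group I would first note that $K_{3,3}$ is vertex- and edge-transitive with automorphism group transitive on (edge, third-vertex) triples, so a primary spoke operation on it produces a single graph up to isomorphism, the top-right graph of Figure~\ref{fig:smallextremal}, and deleting the spoke-edge to $x$ and suppressing the new degree-two vertex recovers $K_{3,3}$, so this graph is nonplanar. For the envelope graph one enumerates the orbits of (edge, vertex) pairs under its automorphism group---there are three---to obtain the three alternatives in the figure, and then exhibits explicit vertex bijections showing that the envelope graph is the bridge of two copies of $W_4$ (delete one vertex from each $K_4$, leaving two triangles joined by a $3$-matching), that the envelope-plus-$e$ alternative is the bridge of $W_4$ and $W_5$ (delete a rim vertex of $W_5$), and that the envelope-plus-$f$ alternative is isomorphic to the top-right graph. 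Combining this last isomorphism with the first representation yields the closing statement: envelope-plus-$f$ is obtained from two copies of $W_4$ using only a bridge operation and a primary spoke, with no edge join, yet it is isomorphic to a graph having a $K_{3,3}$ minor and is therefore nonplanar.

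The main obstacle I anticipate is the bookkeeping in the third paragraph: one must argue carefully that the combination of extremality and an edge join not only bounds the operation counts via Theorem~\ref{thm:numbers} but pins the edge join down to a block containing nothing but that join and a single primary spoke, so that the join genuinely acts on a bare $K_4$. The orbit enumeration and the three explicit isomorphisms for the envelope graph are routine but tedious. The only delicate point in the planarity discussion is that, although bridge operations preserve planarity by Theorem~\ref{thm:crossings}, a primary spoke operation need not---which is precisely the mechanism by which a nonplanar graph appears in a construction that uses no edge joins.
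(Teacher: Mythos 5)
Your proposal is correct and follows essentially the same route as the paper's own (purely in-line) justification of this observation: the wheel recursions by direct computation, nonregularity from $\nu(G)=\lceil(2n+2)/3\rceil<n$ for $n\ge 5$, Theorem~\ref{thm:numbers} to force $t=1$, $s=0$, $p=j+1$, the two edge joins on $K_4$ yielding $K_{3,3}$ and the prism, and explicit isomorphisms for the figure. In fact, on the one point where the paper is terse --- why the edge join must act on a bare $K_4$ rather than on some $3$-regular post-bridge intermediate graph such as the prism --- your argument via the persistence of the degree-four vertex created by each primary spoke (which, since $p=j+1$, forces every leaf to receive its primary spoke before entering any bridge, so that every post-bridge graph has at least two vertices of degree four and admits no further unary operation) is more careful than the paper's appeal to nonregularity alone. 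One small correction to a step you defer as routine: the (edge, third-vertex) pairs of the envelope graph fall into \emph{four} orbits under its automorphism group (triangle edge with same-triangle third vertex, with the antipodal vertex, with a matched neighbor, and matching edge with any third vertex), not three; two of these orbits happen to produce isomorphic graphs, which is why only three nonisomorphic alternatives appear in Figure~\ref{fig:smallextremal}, so your enumeration would need one extra isomorphism check but the conclusion stands.
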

With Theorem~\ref{thm:crossings}, we have the key to combine our present findings as follows.
\begin{theorem}\label{thm:crossings_extremal}
Let~$G$ be an extremal uniformly $3$-connected graph on~${{n = 3 k + \ell \geq  4}}$ vertices, for suitable~$k\in\mathbb{N}$ and~${{\ell\in\{-1,0,1\}}}$. Then ${{\cro(G)\leq 1}}$, and if ${{n = 4}}$ or ${{\ell\in\{-1,0\}}}$, then $G$ is planar.\vspace{-1ex}
\end{theorem}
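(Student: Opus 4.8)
The plan is to combine Theorem~\ref{thm:numbers}, which fixes the operation counts $(j,t,p,s)$ of an extremal graph, with Theorem~\ref{thm:crossings}, which says the bridge operation never increases the sum of the crossing numbers of its inputs. Everything then reduces to understanding the few tiny graphs that can occur as \emph{inputs} to the bridge recursion.

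First I would dispose of $n=4$: the only uniformly $3$-connected graph on four vertices is $K_4$, which is planar. For $n\ge 5$ we have $k\ge 2$, so Theorem~\ref{thm:numbers} applies, and one has $p\in\{j,j+1\}$, $t\le 1$, $s\le 2$ in each of its cases. Recall that a construction with $j$ bridge operations combines $j+1$ input graphs, each built from a single $K_4$ by edge joins and spoke operations, with at most one primary spoke per input. The key point is a degree count: as soon as an input receives its primary spoke it acquires a vertex of degree greater than three, so a bridge of two already-spoked inputs carries two such vertices; by the degree hypothesis of the spoke operation (and since a bridge also destroys $3$-regularity), no secondary spoke and no edge join can be performed after a bridge unless one side of that bridge is still a bare $K_4$. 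Feeding in the counts from Theorem~\ref{thm:numbers} — in particular that whenever $t$ or $s$ is positive one has $p=j+1$, so every input is already spoked — forces the single permitted edge join and the one or two permitted secondary spokes to be absorbed \emph{inside} individual inputs. A short arithmetic check using $n=4+2j+2t+p+s$ together with ``one primary spoke per input'' then shows that every input graph is either a bare $K_4$; a wheel $W_m$ with $m\in\{5,6,7\}$ (a primary spoke on $K_4$ possibly followed by secondary spokes, by Observation~\ref{obs}); or, only when $t=1$, the graph obtained from $K_{3,3}$ or from the triangular prism by one primary spoke.

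To finish: $K_4$, the wheels $W_5,W_6,W_7$ and the triangular prism are planar, and a primary spoke on the prism yields a graph of crossing number at most $1$ (delete the spoke edge to get a planar graph in which its two endpoints share a face, reinsert a subdivided path there, and route the remaining spoke edge across at most one edge). The graph obtained from $K_{3,3}$ by a primary spoke is a subdivision of $K_{3,3}$ together with one extra edge, so it has crossing number exactly $1$, as one sees by placing the subdivision vertex in a suitable face of a planar drawing of $K_{3,3}$ minus an edge. Hence every input has crossing number at most $1$, and at most one input — occurring only in the case $\ell=1$, $t=1$ — has crossing number $1$. Applying Theorem~\ref{thm:crossings} repeatedly along the bridge recursion gives $\cro(G)\le 1$; and when $\ell\in\{-1,0\}$ (or $n=4$) every input is planar, so $G$ is planar.

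The main obstacle is the structural reduction in the second paragraph: one must argue carefully that the rigid operation counts of Theorem~\ref{thm:numbers}, the ``at most one primary spoke per input'' bound, and the degree constraints on secondary spokes and edge joins together pin every input down to one of this handful of graphs on at most seven vertices — above all that no ``large'' $3$-regular graph can slip in as the target of the single edge join, and that the secondary spokes cannot be applied after a bridge. Once this is in place, the remainder is a finite crossing-number check on two small graphs plus one use of Theorem~\ref{thm:crossings}.
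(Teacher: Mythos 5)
Your overall strategy coincides with the paper's: dispose of $n=4$ directly, use Theorem~\ref{thm:numbers} to fix $(j,t,p,s)$, reduce the construction to a handful of small building blocks, and push crossing numbers through the bridge recursion via Theorem~\ref{thm:crossings}. The paper identifies essentially the same blocks ($K_4$, the wheels $W_5,W_6,W_7$, and the graphs obtained from $K_{3,3}$ or the envelope graph --- your triangular prism --- by one primary spoke), so the route is not genuinely different.

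There is, however, a concrete gap in your structural reduction. You assume the operations can be arranged so that every edge join and every spoke is performed \emph{inside} one of the $j+1$ bridge-free inputs, and you conclude that a nonplanar block arises only when $t=1$. But Theorem~\ref{thm:constr} allows operations to interleave, and a \emph{primary} spoke may be applied after a bridge as long as the bridged graph is still $3$-regular: bridging two bare copies of $K_4$ yields the prism (the paper's envelope graph), which is $3$-regular, and a primary spoke on it can produce a nonplanar $7$-vertex graph isomorphic to your $K_{3,3}$-plus-spoke block. Already for $n=7$ (so $\ell=1$, $j=k-1=1$, $t=s=0$) this gives a nonplanar extremal graph with $t=0$, contradicting your claim that crossing number $1$ occurs only when $t=1$; the paper records exactly this in Observation~\ref{obs} (``nonplanar graphs might arise even if we forbid edge joins'') and treats it in the $j=k-1$ branch of its proof. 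Your conclusions do survive, but only after an argument you have not supplied: one must show that the nodes of the bridge tree receiving primary spokes form an antichain, so that $p=j+1$ (the cases $\ell\in\{-1,0\}$, and $\ell=1$ with $j=k-2$) forces every primary spoke onto a leaf $K_4$ --- which is what actually yields planarity --- while $p=j$ permits at most one primary spoke on an internal node, necessarily the prism obtained from two bare $K_4$'s, so that at most one block of crossing number $1$ can ever occur. Without this, the final case analysis and in particular the planarity claim for $\ell\in\{-1,0\}$ are not justified as written.
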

\begin{proof}
The only uniformly $3$-connected graph for~${{n=4}}$ is the complete graph on four vertices. It is an extremal one and it is planar. Consider now an extremal uniformly $3$-connected graph~$G$ on~${{n=3@k+\ell\geq 5}}$ vertices, where~${{k\in\mathbb{N}\setminus\{1\}}}$. If~${{\ell\in\{-1,0\}}}$, then Items \ref{thm:numbers:i} to \ref{thm:numbers:iii} of Theorem~\ref{thm:numbers} tell us that~$G$ is built by~${{k-1}}$ primary spoke operations, one secondary spoke operation if ${{\ell=0}}$, and ${{k-2}}$ bridge operations. In other words, $G$ results from using the bridge operation recursively to combine wheels~$W_5$, and one wheel~$W_6$ if~${{\ell=0}}$. So~$G$ is planar by Theorem~\ref{thm:crossings}.

If~${{\ell=1}}$, then Items~\ref{thm:numbers:i}~and~\ref{thm:numbers:iv} of Theorem~\ref{thm:numbers} tell us that~$G$ is built by~${{k-1}}$ primary spoke operations. If~${{j=k-1}}$, then~${{t=s=0}}$. So~$G$ results from recursively using the bridge operation to combine one wheel~$W_4$ and ${{k-1}}$ wheels~$W_5$ or, in view of Observation~\ref{obs}, to combine one of the graphs in the bottom right corner of Figure~\ref{fig:smallextremal} with~${{k-2}}$ wheels~$W_5$. So~$\cro(G)\leq 1$ by Theorem~\ref{thm:crossings}.

It remains the case where~${{\ell=1}}$ and~${{j=k-2}}$. If~${{t=1}}$, then ${{s=0}}$ and $G$ results from using the bridge operation recursively to combine wheels~$W_5$ with one of the graphs on the right of Figure~\ref{fig:smallextremal}. So~${{\cro(G)\leq1}}$ by Theorem~\ref{thm:crossings}. If~${{t=0}}$, then ${{s=2}}$ and $G$ results from using the bridge operation recursively to combine wheels~$W_5$ with two~$W_6$ or one~$W_7$. So~${{\cro(G)\leq1}}$ by Theorem~\ref{thm:crossings}.
\end{proof}

\begin{figure}[t]
    \centering
    \includegraphics[valign=t]{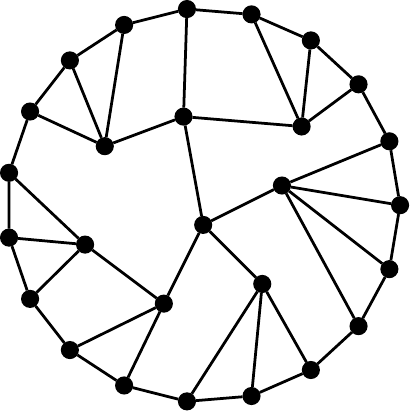}\hspace{20mm}
    \includegraphics[valign=t]{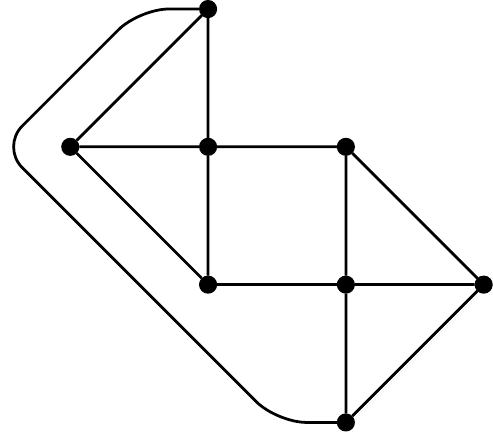}
    \caption{Extremal uniformly $3$-connected graphs}
    \label{fig:halin}
\end{figure}

Questions concerning the colorability of uniformly $3$-connected graphs are addressed by Aboulker, Brettell, Havet, Marx, and Trotignon~\cite{aboulker2017coloring}. There, it is shown that uniformly $3$-connected graphs, except the wheels on an even number of vertices, are $3$-colorable. Moreover, the authors demonstrate that such a coloring can be determined in polynomial time.

Another aspect one may notice is a certain similarity between extremal uniformly $3$-connected graphs and \emph{Halin graphs}, surveyed by Brandstadt, Le, and Spinrad~\cite{brandstadt1999graph}. Those are graphs that can be obtained by embedding a tree without vertices of degree two in the plane and connecting its leaves by a cycle without crossing any of the tree edges. By the previous proof, we can obtain those Halin graphs where the inner vertices are of degree four, with few exceptions. If~${{\ell=0}}$, we may have one vertex of degree five. If~${{\ell=1}}$, we may have two vertices of degree five or one of degree six. An example is illustrated on the left in Figure~\ref{fig:halin}. In general, Halin graphs can be seen to be uniformly $3$-connected, but not the other way around. Counterexamples are certainly nonplanar (extremal) uniformly $3$-connected graphs and even for~${{\ell=-1}}$, we find for example the graph depicted on the right in Figure~\ref{fig:halin}.
\begin{figure}[b]
    \centering
    \includegraphics{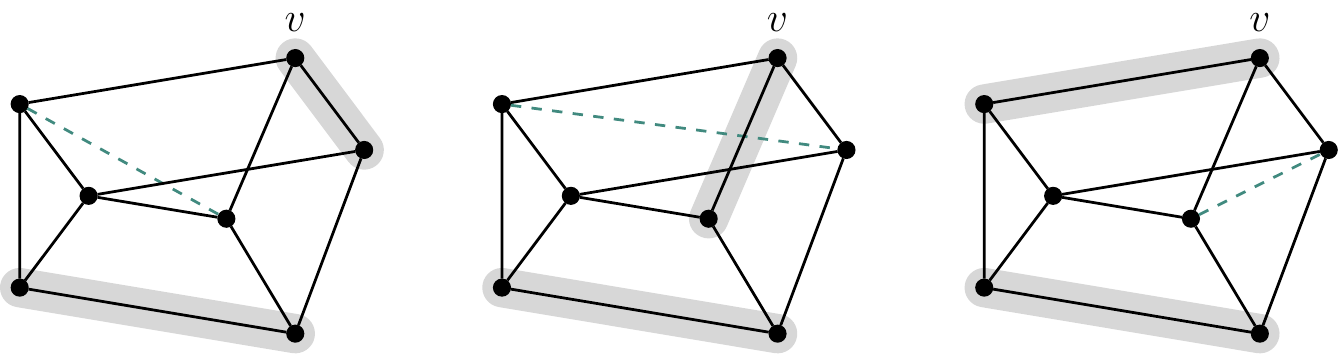}
    \caption{An extremal uniformly $3$-connected graph containing an unsafe vertex~$v$}
    \label{fig:unsavevertex}
\end{figure}%
The overlap with the class of Halin graphs motivates the question of whether extremal uniformly \mbox{$3$-connected} graphs have a similar tree-like structure. Whereas, as Bodlaender \cite{bodlaender1988planar} shows, Halin graphs have treewidth three, general uniformly \mbox{$3$-connected} graphs have unbounded treewidth. Meeks~\cite{meeks2016challenges} demonstrates this by an example illustrating that for any~${{k\in\mathbb{N}}}$ there are $3$-regular, $3$-connected graphs having a ${{k\times k}}$ grid as minor. In contrast, the small extremal uniformly $3$-connected graphs in Figure~\ref{fig:smallextremal} as well as wheel graphs have treewidth three. In view of Observation~\ref{obs} and the proof of Theorem~\ref{thm:crossings}, those are the elemental building blocks for the bridge operation when constructing extremal uniformly $3$-connected graphs. Theorem~\ref{thm:treewidth} guarantees that the bridge operation preserves the treewidth of the input graphs if no unsafe vertices arise in the course of the construction. But Figure~\ref{fig:unsavevertex} shows an extremal uniformly $3$-connected graph that has an unsafe vertex. The depicted graph is that from the bottom right corner of Figure~\ref{fig:smallextremal} where edge~$f$ is added. In Figure~\ref{fig:unsavevertex}, the neighborhood of~$v$ is independent and connecting any of its neighbors, indicated by the dashed green lines, produces a~$K_5$ minor, which can be seen by contracting the respective vertices shaded gray. As illustrated in Figure~\ref{fig:cliqueAtunsafe}, adding a wheel by the bridge operation at~$v$, produces a graph of treewidth four. Although this is the only unsafe situation we identified so far, we are not sure whether others exist, precluding us to show that the treewidth is bounded by four, using Theorem~\ref{thm:treewidth} only. However, with Theorem~\ref{thm:twbridgeclass}, we can at least verify bounded treewidth.

\begin{corollary}[of Theorem~\ref{thm:twbridgeclass}]\label{cor:tw13}
The treewidth of any extremal uniformly $3$-connected graph~$G$ is bounded by~${{\tw(G)\leq 13}}$.
\end{corollary}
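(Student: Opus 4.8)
The plan is to deduce the bound directly from Theorem~\ref{thm:twbridgeclass}, so the only thing to do is to exhibit a base class of bounded ``line graph treewidth'' from which every extremal uniformly $3$-connected graph can be built by bridge operations. I would extract this base class from the proof of Theorem~\ref{thm:crossings_extremal} together with Observation~\ref{obs}: going through the cases $\ell\in\{-1,0,1\}$ there, every extremal uniformly $3$-connected graph is obtained by repeatedly applying the bridge operation to graphs from the finite class~$\mathcal{B}$ consisting of the wheels $W_4,W_5,W_6,W_7$ and the seven-vertex graphs arising from $K_{3,3}$ and the envelope graph by a single primary spoke operation. That no further ``elemental'' graphs occur is essentially Observation~\ref{obs}: a spoke operation is only applicable to a graph with at most one vertex of degree exceeding three, so the graphs glued together by the bridge operation are always wheels or one of these few exceptional graphs coming from an edge join.

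The substantive step is to bound $\tw(L(H))$ for the finitely many $H\in\mathcal{B}$. For a wheel $W_m$, the line graph $L(W_m)$ consists of a clique on the $m-1$ spokes together with a cycle on the $m-1$ rim edges, each rim edge being adjacent to exactly the two spokes incident with it; I would write down an explicit tree decomposition whose spine consists of bags formed from the spoke clique with a few rim edges threaded through it, and then check the handful of seven-vertex graphs by hand (for them the line graph is small enough that a direct decomposition can be exhibited). In this way one verifies, by a finite computation, that $\tw(L(H))\le 6$ for every $H\in\mathcal{B}$.

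Given this, Theorem~\ref{thm:twbridgeclass} applied with $w=6$ to the base class~$\mathcal{B}$ immediately yields $\tw(G)\le 2\cdot 6+1=13$ for every graph $G$ obtained from~$\mathcal{B}$ by bridge operations, and in particular for every extremal uniformly $3$-connected graph.

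The step I expect to be the real obstacle is the treewidth bound for the line graphs in the second paragraph. Already $L(W_7)$ contains a $K_6$ on its spokes, so its treewidth is at least five, and the cycle of rim edges wrapping around this clique forces any small-width tree decomposition to carry several spokes together with a rim edge through many bags; one has to set up the threading carefully so that the bags stay within the required width. Everything else is bookkeeping built on top of Theorems~\ref{thm:crossings_extremal} and~\ref{thm:twbridgeclass}.
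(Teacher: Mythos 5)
Your overall strategy coincides with the paper's: exhibit a finite base class $\mathcal{B}$, bound the treewidth of its line graphs by $w=6$, and invoke Theorem~\ref{thm:twbridgeclass}. The gap sits exactly at the step you flagged as the obstacle: the claim $\tw(L(W_7))\leq 6$ is false. Label the rim edges of $W_7$ cyclically as $r_1,\dots,r_6$, with $r_i$ incident to the spokes $s_i$ and $s_{i+1}$ (indices mod $6$). The six spokes share the hub, so they form a $K_6$ in $L(W_7)$, and by Lemma~\ref{lem:cliques} every tree decomposition has a bag $B$ containing all of $s_1,\dots,s_6$. If the width were $6$, then $|B|\leq 7$, so $B$ contains at most one rim vertex, say $r_1$ (the case of none is handled the same way). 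The set $S=\{r_2,\dots,r_6\}$ induces a connected path in $L(W_7)$, so the nodes whose bags meet $S$ form a connected subtree $T_S$ that avoids $B$; let $B'$ be the node of $T_S$ nearest to $B$. Every one of the (at most seven) vertices of $B$ has a neighbour in $S$: indeed $s_1\sim r_6$, $s_i\sim r_i$ for $2\leq i\leq 6$, and $r_1\sim r_2$. Each such vertex must therefore lie in some bag of $T_S$, and since it also lies in $B$, the connectivity condition of Definition~\ref{def:trd} forces it into $B'$. Hence $B'$ contains all seven vertices of $B$ together with at least one vertex of $S$, so $|B'|\geq 8$, a contradiction. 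Thus $\tw(L(W_7))\geq 7$, your proposed finite verification cannot succeed for $W_7$, and applying Theorem~\ref{thm:twbridgeclass} to a base class containing $W_7$ yields only $\tw(G)\leq 2\cdot 7+1=15$, not $13$.

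The paper's own proof keeps $w=6$ by taking as base class only the graphs of Figure~\ref{fig:smallextremal} and wheels on at most \emph{six} vertices, certified by the explicit decompositions in Figure~\ref{fig:small_extremal_line_graphs}; note that already $\tw(L(W_6))=6$, so there is no slack. Your inclusion of $W_7$ is actually the more faithful reading of the case $\ell=1$, $t=0$, $s=2$ in the proof of Theorem~\ref{thm:crossings_extremal}, where both secondary spoke operations may land on the same wheel and produce a $W_7$ block. To rescue the constant $13$ along your route you would have to eliminate or separately handle that block (it cannot be absorbed as a minor of the other base graphs, precisely because its line graph has larger treewidth); as written, your argument establishes at best the weaker bound $\tw(G)\leq 15$.
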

\begin{figure}[t]
    \centering
    \includegraphics{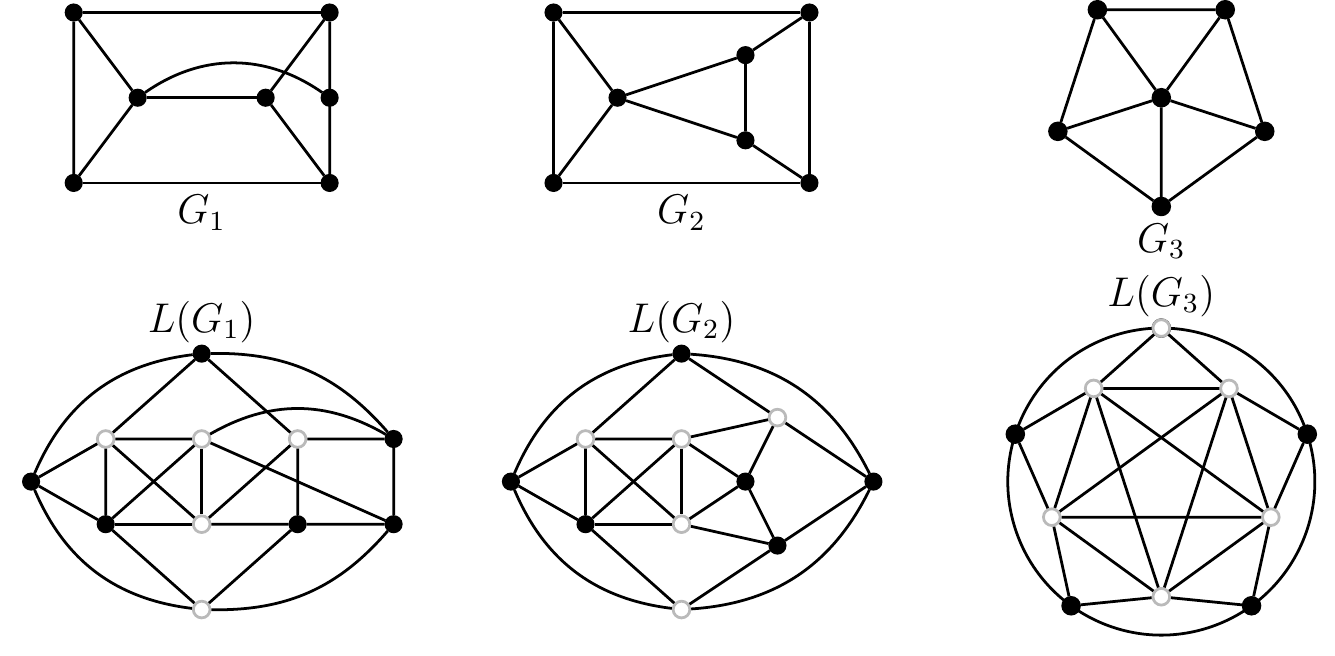}
    \caption{Small extremal uniformly $3$-connected graphs and their corresponding line graphs}
    \label{fig:small_extremal_line_graphs}
\end{figure}%
\begin{proof}
In Observation~\ref{obs} and the proof of Theorem~\ref{thm:crossings_extremal}, we argue that any extremal uniformly \mbox{$3$-connected} graph can be built by successively taking the bridge operation to join graphs from Figure~\ref{fig:smallextremal} and wheel graphs on up to six vertices. The treewidth of the line graphs of those graphs is bounded by~${{w=6}}$. This can be seen by investigating Figure~\ref{fig:small_extremal_line_graphs}. The graphs shown there satisfy~${{\tw(L(G_1))\leq 5}}$, ${{\tw(L(G_2))\leq 5}}$, and~${{\tw(L(G_3))\leq 6}}$. To certify this, let us determine respective tree decompositions. In all three graphs, removing the vertices highlighted by gray circles, and incident edges, leaves us with a tree, for which we easily find a tree decomposition of width one. Putting the vertices highlighted by gray circles in every bag, provides us with suitable tree decompositions. Also recall that the graph from the bottom right corner of Figure~\ref{fig:smallextremal} where edge~$e$ is added can be obtained by joining a wheel on five vertices with one on four vertices by the bridge operation. Clearly, all remaining line graphs of graphs from Figure~\ref{fig:smallextremal} as well as line graphs of wheels on four and five vertices are minors of one of the graphs depicted in Figure~\ref{fig:small_extremal_line_graphs}. So Theorem~\ref{thm:twbridgeclass} provides us with the asserted bound.
\end{proof}

\noindent\textbf{Open Question.} In view of Corollary~\ref{cor:tw13} and Figures~\ref{fig:unsavevertex} and~\ref{fig:cliqueAtunsafe}, we know that there is a general upper bound~${{4\leq C \leq 13}}$ such that~${{\tw(G)\leq C}}$ holds for any extremal uniformly $3$-connected graph~$G$. It is open what the smallest such bound~$C$ is. We tend to believe that~${{C=4}}$.


\nocite{*}
\bibliographystyle{plainnat}
\bibliography{bibliography}

\begin{thebibliography}{17}
\providecommand{\natexlab}[1]{#1}
\providecommand{\url}[1]{\texttt{#1}}
\expandafter\ifx\csname urlstyle\endcsname\relax
  \providecommand{\doi}[1]{doi: #1}\else
  \providecommand{\doi}{doi: \begingroup \urlstyle{rm}\Url}\fi

\bibitem[Aboulker et~al.(2017)Aboulker, Brettell, Havet, Marx, and Trotignon]{aboulker2017coloring}
Pierre Aboulker, Nick Brettell, Fr{\'e}d{\'e}ric Havet, D{\'a}niel Marx, and Nicolas Trotignon.
\newblock Coloring graphs with constraints on connectivity.
\newblock \emph{Journal of Graph Theory}, 85\penalty0 (4):\penalty0 814--838, 2017.

\bibitem[Arnborg and Proskurowski(1989)]{arnborg1989linear}
Stefan Arnborg and Andrzej Proskurowski.
\newblock {Linear time algorithms for \mbox{$NP$-hard} problems restricted to partial $k$-trees}.
\newblock \emph{Discrete Applied Mathematics}, 23\penalty0 (1):\penalty0 11--24, 1989.

\bibitem[Beineke et~al.(2002)Beineke, Oellermann, and Pippert]{beineke2002average}
Lowell~W. Beineke, Ortrud~R. Oellermann, and Raymond~E. Pippert.
\newblock The average connectivity of a graph.
\newblock \emph{Discrete Mathematics}, 252\penalty0 (1-3):\penalty0 31--45, 2002.

\bibitem[Bodlaender(1988)]{bodlaender1988planar}
Hans~L. Bodlaender.
\newblock Planar graphs with bounded treewidth.
\newblock \emph{Technical Report RUU-CS-88-14, Utrecht University}, 1988.

\bibitem[Bodlaender(1998)]{bodlaender1998partial}
Hans~L. Bodlaender.
\newblock A partial k-arboretum of graphs with bounded treewidth.
\newblock \emph{Theoretical Computer Science}, 209\penalty0 (1-2):\penalty0 1--45, 1998.

\bibitem[Bodlaender and Koster(2008)]{bodlaender2008combinatorial}
Hans~L. Bodlaender and Arie M. C.~A. Koster.
\newblock Combinatorial optimization on graphs of bounded treewidth.
\newblock \emph{The Computer Journal}, 51\penalty0 (3):\penalty0 255--269, 2008.

\bibitem[Bodlaender et~al.(1997)Bodlaender, Van~Leeuwen, Tan, and Thilikos]{bodlaender1997interval}
Hans~L. Bodlaender, Jan Van~Leeuwen, Richard Tan, and Dimitrios~M. Thilikos.
\newblock On interval routing schemes and treewidth.
\newblock \emph{Information and Computation}, 139\penalty0 (1):\penalty0 92--109, 1997.

\bibitem[Brandstadt et~al.(1999)Brandstadt, Le, and Spinrad]{brandstadt1999graph}
Andreas Brandstadt, Van~Bang Le, and Jeremy~P. Spinrad.
\newblock \emph{Graph Classes: A Survey}.
\newblock Monographs on Discrete Mathematics and Applications. Society for Industrial and Applied Mathematics, 1999.

\bibitem[Diestel(2017)]{diestel2017graph}
Reinhard Diestel.
\newblock \emph{Graph Theory}.
\newblock Springer, 2017.

\bibitem[Göring et~al.(2022)Göring, Hofmann, and Streicher]{hofmann2022uniformly}
Frank Göring, Tobias Hofmann, and Manuel Streicher.
\newblock Uniformly connected graphs.
\newblock \emph{Journal of Graph Theory}, pages 1--16, 2022.

\bibitem[Halin(1969)]{halin1969theorem}
Rudolf Halin.
\newblock A theorem on $n$-connected graphs.
\newblock \emph{Journal of Combinatorial Theory}, 7\penalty0 (2):\penalty0 150--154, 1969.

\bibitem[Harvey and Wood(2018)]{harvey2018treewidth}
Daniel~J Harvey and David~R Wood.
\newblock The treewidth of line graphs.
\newblock \emph{Journal of Combinatorial Theory, Series B}, 132:\penalty0 157--179, 2018.

\bibitem[Mader(1979)]{mader1979struktur}
Wolfgang Mader.
\newblock {Zur Struktur minimal $n$-fach zusammenhängender Graphen}.
\newblock \emph{Abhandlungen aus dem Mathematischen Seminar der Universität Hamburg}, 49\penalty0 (1):\penalty0 49--69, 1979.

\bibitem[Meeks(2016)]{meeks2016challenges}
Kitty Meeks.
\newblock The challenges of unbounded treewidth in parameterised subgraph counting problems.
\newblock \emph{Discrete Applied Mathematics}, 198:\penalty0 170--194, 2016.

\bibitem[Tutte(1961)]{tutte1961three}
William~T. Tutte.
\newblock A theory of 3-connected graphs.
\newblock \emph{Indagationes Mathematicae}, 23\penalty0 (441-455), 1961.

\bibitem[Tutte(1966)]{tutte1966connectivity}
William~T. Tutte.
\newblock \emph{Connectivity in Graphs}.
\newblock University of Toronto Press, 1966.

\bibitem[West(2001)]{west2001introduction}
Douglas~B. West.
\newblock \emph{Introduction to Graph Theory}.
\newblock Prentice Hall, 2001.

\end{thebibliography}
\label{sec:biblio}

\end{document}